\newtheorem{te}{Theorem}[section]
\newtheorem{os}[te]{Remark}
\numberwithin{equation}{section}
\begin{document}

\title[Cyclic planar random motions with four direcions]{Cyclic random motions with orthogonal directions}
\author{E. Orsingher$^1$}
\address{$^1$ Dipartimento di Scienze Statistiche, Sapienza University of Rome}
\author{R. Garra$^1$}
\address{$^1$ Dipartimento di Scienze Statistiche, Sapienza University of Rome}
\author{A. I. Zeifman$^2$ $^3$ $^4$}
\address{$^2$ Vologda State University, Russia}
\address{$^3$ Institute of Informatics Problems, Federal Research Center “Informatics and Control” of RAS, Russia}
\address{$^4$ Vologda Research Center of RAS, Russia}

\begin{abstract}
A cyclic random motion at finite velocity with orthogonal directions is considered in the plane and in $\mathbb{R}^3$.
We obtain in both cases the explicit conditional distributions of the position of the moving particle
when the number of switches of directions is fixed. The explicit unconditional distributions
are also obtained and are expressed in terms of Bessel functions. 
The governing equations are derived and given as products of D'Alembert operators.
The limiting form of the equations is provided in the Euclidean space $\mathbb{R}^d$ and
takes the form of a heat equation with infinitesimal variance $1/d$.

\bigskip

\textit{Keywords:} Cyclic random motions, Bessel functions, random flights, Klein-Gordon equations.

\end{abstract}

\maketitle

\section{Introduction}

Random motions at finite velocity with a finite number of directions have been introduced
and analyzed since the eighties by different authors (see e.g. \cite{kol,kol1,enzo} and
the references therein).
We distinguish these motions according to the number of possible directions, the chance mechanism governing
the change of directions and the processes adopted to govern these changes. Of course, the space dimension
is important and throughout this paper we suppose that the motions considered develop in 
Euclidean spaces $\mathbb{R}^d$, with $d\geq 2$, although extensions of motions in non-Euclidean 
spaces have also been studied in the literature (see for example \cite{vale}).
The case of minimal cyclic random motions (see \cite{Lachal06} and \cite{Lachal})
has been studied in full generality by adopting the order statistics technique.
The interesting fact about these motions is that the distributions are expressed in terms
of hyper-Bessel functions, the prototype of which has the form 
\begin{equation}
I_{0,n}(x) =\sum_{k=0}^\infty \left(\frac{x}{n}\right)^{nk}\frac{1}{k!^n}.
\end{equation}
However, the probability distributions obtained are rather cumbersome even in the planar case.
In the case of orthogonal directions in the plane, the situation seems a little bit better and in some
cases the explicit distributions for the position of the randomly moving particle are obtained.
This is the case where the particle choses initially with probability 1/4 one of the four 
orthogonal directions with two possible changes of direction at Poisson times (with probability 1/2 the new 
directions are those orthogonal to that before the switch).
In this case, the current position of motion $\left(X(t),Y(t)\right)$ can be expressed in terms of independent 
telegraph processes $U(t)$ and $V(t)$ as
\begin{equation}\label{i0}
\begin{cases}
X(t)= U(t)+V(t),\\
Y(t) = U(t)-V(t).
\end{cases}
\end{equation}
If the planar motion has velocity $c$ and the changes of direction are governed by a homogeneous Poisson 
process with rate $\lambda$, then $U(t)$ and $V(t)$ are independent telegraph processes
with parameters $\left(\frac{c}{2},\frac{\lambda}{2}\right)$.
For details on this type of motion, see \cite{enzo}.
In this case, the technique adopted is based on the analysis of the fourth-order equation 
governing the distribution $p(x,y,t)$ of \eqref{i0} which reads 
\begin{equation}
\left(\frac{\partial}{\partial t} +\lambda\right)^2\bigg[\frac{\partial^2}{\partial t^2}-c^2 \Delta+2\lambda\frac{\partial}{\partial t}\bigg]p+c^4 \frac{\partial^4 p}{\partial x^2 \partial y^2} = 0.
\end{equation} 
Unfortunately, the order of the equations is equal to the number of possible
directions of motions which is a crucial difficulty in the derivation 
of the probability distributions.
In the present paper we study cyclic motions at finite velocity with 
orthogonal directions in $\mathbb{R}^2$ and $\mathbb{R}^3$ and are able 
to obtain explicit distributions of the vector processes
$\left(X(t), Y(t)\right)$ and $\left(X(t), Y(t), Z(t)\right)$.
From the analysis presented it emerges that in orthogonal motions in $\mathbb{R}^d$
performed at velocity $c$ and with a cyclic regime, the vector process
$\left(X_1(t),\dots, X_d(t)\right)$ has distribution $p(x_1, \dots, x_d, t)$
satisfying the equation of order $2d$ (that is the number of possible directions)
\begin{equation}\label{i1}
\prod_{i=1}^d \bigg[\left(\frac{\partial}{\partial t}+\lambda\right)^2-c^2\frac{\partial^2}{\partial x_i^2}\bigg]p-
\lambda^{2d}p = 0.
\end{equation}
At time $t$ the set of possible positions is a solid $S^d_{ct}$ with $2d$ vertices.
The outer shell of this set is reached in the cyclic case only if the number 
of Poisson events is less or equal to $d$.
Starting from the $d+1$-th Poisson event the inner points of $S^d_{ct}$ are reached with 
an increasing probability as time $t$ flows. The distribution $p(x_1, \dots, x_d, t)$
is uniform on the set homothetic to the outer surface $\partial S^d_{ct}$. 
For example, in the plane $S^2_{ct}$, this is a square with four vertices and the absolutely
continuous component of the distribution is uniform on squares
\begin{equation}
z = \pm x\pm y, \quad 0\leq z\leq ct.
\end{equation}
This is tantamount to reducing equation \eqref{i1} to the differential system 
\begin{equation}\label{i4}
\bigg[\left(\frac{\partial}{\partial t}+\lambda\right)^2-c^2\frac{\partial^2}{\partial x_k^2}\bigg] p = \lambda^2 p, \quad k = 1, 2, \quad d.
\end{equation}
which on $S^d_z$ becomes
\begin{equation}
\bigg[\left(\frac{\partial}{\partial t}+\lambda\right)^2-c^2\frac{\partial^2}{\partial z^2}\bigg] p = \lambda^2 p
\end{equation}
and coincides with a Klein-Gordon equation after the exponential transformation
$p = e^{-\lambda t}w $. This permits us to write down 
the distribution $p(z,t)$ also in the $d$-dimensional case in the form
\begin{equation}\label{i2}
p(z,t) = e^{-\lambda t}\sum_{i=0}^d A_i\frac{\partial^i}{\partial t^i}I_0\left(\frac{\lambda}{c}\sqrt{c^2t^2-z^2}\right), \quad 0\leq z \leq t,
\end{equation}
where the real coefficients $A_i$ appearing in \eqref{i2} are obtained by imposing the condition
\begin{equation}
\int_0^{ct} p(z,t)dz = 1-\sum_{k=0}^{d-1} P\left(N(t) = k \right),
\end{equation}
where $N(t)$ denotes the homogeneous Poisson process.
In the next sections we derive the equations \eqref{i4} explicitly in $\mathbb{R}^2$
and $\mathbb{R}^3$ and give the distributions \eqref{i2} with the explicit 
values of the coefficients $A_i$.\\
We are able to extract from our analysis the explicit conditional distributions which in $\mathbb{R}^3$
take the form 
\begin{equation}\nonumber
\begin{cases}
& \Pr\{\mathcal{U}(t)\in du| N(t) = 2k+2\} =\frac{du \ (2k+2)!}{(k+2)!(k-1)!}\frac{(c^2t^2-u^2)^{k-1}}{(2ct)^{2k+1}}(c^2t^2+3u^2), \ k\geq 1 \\
& \Pr\{\mathcal{U}(t)\in du| N(t) = 2k+1\} =\frac{du \ (2k+1)!}{2^{2k}(ct)^{2k+1}(k-1)!(k+1)!}(c^2t^2-u^2)^{k-1}(c^2t^2+u^2), \ k\geq 1,
\end{cases}
\end{equation}

where $\mathcal{U}(t)$ represents the relevant parameter of the octahedron of uniform distribution.
For $N(t)= 0$ the random particle reaches the vertices of $S^3_{ct}$.

\section{Cyclic planar random motion with orthogonal directions}

In this section we study a planar random motion with orthogonal directions taken cyclically at Poisson paced times. 
This model is suitable for describing the motion of a particle in a vortex. We assume that the possible
directions of motion $d_j$, $j = 1,2,3,4$ are orthogonal and represented by the vectors
\begin{align}
\nonumber & d_1 = (1,0), \quad d_2 =(0,1)\\
\nonumber & d_3 = (-1,0),\quad d_4 =(0,-1),
\end{align}
or in a more compact form
\begin{equation}
\nonumber d_k = \left(\cos \frac{k\pi}{2}, \sin\frac{k\pi}{2}\right), \quad k = j-1.
\end{equation}
The particle takes the directions cyclically, that is from $d_j$ it switches to $d_{j+1}$ and,
of course, $d_j = d_{j+4}$. The initial direction is chosen by a uniform law.
The sample paths of this motion are therefore of the form depicted in Fig.1. \\

   \begin{figure}
            \centering
            \includegraphics[scale=.73]{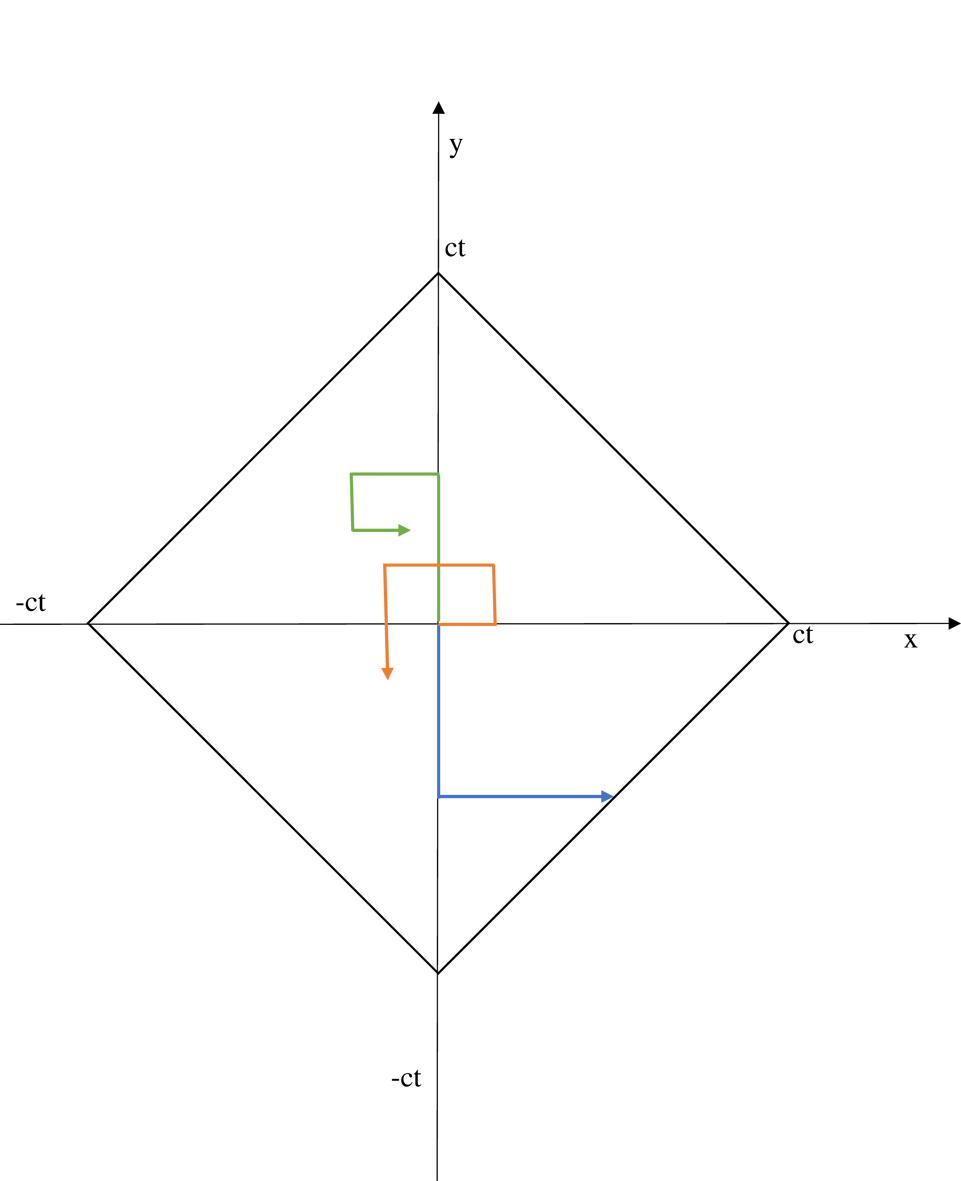}
            \caption{Some sample paths of the planar random motion with different 
            initial directions are depicted.}
            \label{figura}
        \end{figure}

In order to analyze this random motion we need the following four probability distributions
\begin{equation}
f_j(x,y,t) \ dx \ dy = \Pr\{X(t)\in dx, Y(t)\in dy, D(t)= d_j\}, \quad j = 1,2,3,4
\end{equation}
The process $D(t)$, $t>0$, takes values $d_j$, $j= 1,2,3,4$ and represents the current direction
of motion.

By $\lambda$ we denote the rate of the homogeneous Poisson process governing the changes of direction.
The position of the moving particle at time $t$ is in the square
\begin{equation}
S_{ct}= \{(x,y): |x+y|\leq ct, \ |x-y|\leq ct\}.
\end{equation}
The border $\partial S_{ct}$ is reached by the particle when either no Poisson event occurs or
exactly one Poisson event deviates the motion, i.e.
\begin{equation}
\Pr\{(X(t),Y(t))\in \partial S_{ct}\} = \Pr(N(t)= 0)+\Pr(N(t)= 1) = e^{-\lambda t}+\lambda t e^{-\lambda t}
\end{equation}

The absolutely continuous component of the probability distribution 
\begin{equation}\label{pd}
P(x,y,t)= \Pr\{X(t)\leq x, Y(t)\leq y \},
\end{equation}
has density given by
\begin{equation}
p(x,y,t) = f_1+f_2+f_3+f_4.
\end{equation}
Therefore, we are firstly interested in obtaining a single PDE governing $p(x,y,t)$ starting from the
full set of the equations governing the distributional laws of this kind of cyclic planar random motion.

The probabilities $f_j(x,y,t)$, $j = 1,2,3,4$ satisfy the following differential system
\begin{equation}\label{syst}
\begin{cases}
\frac{\partial f_1}{\partial t} = -c \frac{\partial f_1}{\partial x}+\lambda (f_4-f_1)\\
\frac{\partial f_2}{\partial t} = -c \frac{\partial f_2}{\partial y}+\lambda (f_1-f_2)\\
\frac{\partial f_3}{\partial t} = c \frac{\partial f_3}{\partial x}+\lambda (f_2-f_3)\\
\frac{\partial f_4}{\partial t} = c \frac{\partial f_4}{\partial y}+\lambda (f_3-f_4)
\end{cases}
\end{equation}
Therefore, we have the following result

\begin{te}
The absolutely continuous component of the probability density of the distribution \eqref{pd} satisfies 
the fourth order partial differential equation 
\begin{equation}\label{1}
\bigg[\left(\frac{\partial}{\partial t} +\lambda\right)^2-c^2\frac{\partial^2}{\partial x^2} \bigg]\bigg[\left(\frac{\partial}{\partial t} +\lambda\right)^2-c^2\frac{\partial^2}{\partial y^2}\bigg]p-\lambda^4 p = 0 
\end{equation} 
\end{te}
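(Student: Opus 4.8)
The plan is to decouple the first-order system \eqref{syst} by reading it as a cyclic chain of constant-coefficient transport operators and then eliminating the auxiliary densities one at a time. First I would introduce the four first-order operators
\[
\begin{gathered}
L_1 = \frac{\partial}{\partial t}+c\frac{\partial}{\partial x}+\lambda, \qquad L_2 = \frac{\partial}{\partial t}+c\frac{\partial}{\partial y}+\lambda, \\
L_3 = \frac{\partial}{\partial t}-c\frac{\partial}{\partial x}+\lambda, \qquad L_4 = \frac{\partial}{\partial t}-c\frac{\partial}{\partial y}+\lambda,
\end{gathered}
\]
so that \eqref{syst} becomes the cyclic relations $L_1 f_1 = \lambda f_4$, $L_2 f_2 = \lambda f_1$, $L_3 f_3 = \lambda f_2$, $L_4 f_4 = \lambda f_3$. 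The decisive observation is that, since all four operators have constant coefficients they commute, and the two opposite directions pair up into exactly the D'Alembert factors appearing in \eqref{1}:
\[
L_1 L_3 = \left(\frac{\partial}{\partial t}+\lambda\right)^2-c^2\frac{\partial^2}{\partial x^2}, \qquad L_2 L_4 = \left(\frac{\partial}{\partial t}+\lambda\right)^2-c^2\frac{\partial^2}{\partial y^2}.
\]

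Second, starting from $L_1 f_1 = \lambda f_4$ I would climb around the cycle, applying $L_4$, then $L_3$, then $L_2$, and at each step using the next relation to re-express the right-hand side:
\[
L_4 L_1 f_1 = \lambda^2 f_3, \qquad L_3 L_4 L_1 f_1 = \lambda^3 f_2, \qquad L_2 L_3 L_4 L_1 f_1 = \lambda^4 f_1.
\]
Using commutativity to regroup $L_1 L_2 L_3 L_4 = (L_1 L_3)(L_2 L_4)$, this says precisely that $f_1$ satisfies \eqref{1} with $p$ replaced by $f_1$.

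Third, the cyclic symmetry of the chain means the identical elimination applied to the remaining three relations shows that each $f_j$, $j=1,2,3,4$, solves the same fourth-order equation $(L_1 L_3)(L_2 L_4)f_j = \lambda^4 f_j$. Since the operator $(L_1 L_3)(L_2 L_4)-\lambda^4$ is linear, summing the four identities yields $(L_1 L_3)(L_2 L_4)p = \lambda^4 p$ for $p = f_1+f_2+f_3+f_4$, which is exactly \eqref{1}.

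The only delicate point is the commutativity bookkeeping: one must verify that the fourfold product $L_1 L_2 L_3 L_4$ genuinely regroups into the product of the two D'Alembert operators and that the successive applications around the cycle return cleanly to $\lambda^4 f_j$ with no leftover cross terms. Because the coefficients are constant this is routine, but it is where the whole mechanism resides — the orthogonality of the directions is precisely what lets the $x$- and $y$-factors separate.
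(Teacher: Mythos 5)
Your proposal is correct and follows what is essentially the paper's (implicit) route: the paper states the theorem directly after the system \eqref{syst}, and the intended derivation is exactly this operator elimination, where each equation of the cyclic chain $L_{j}f_j=\lambda f_{j-1}$ is hit successively by the remaining transport operators so that each $f_j$, and hence by linearity $p=f_1+f_2+f_3+f_4$, satisfies $(L_1L_3)(L_2L_4)f_j=\lambda^4 f_j$. Your commutativity bookkeeping and the factorizations $L_1L_3=\left(\frac{\partial}{\partial t}+\lambda\right)^2-c^2\frac{\partial^2}{\partial x^2}$ and $L_2L_4=\left(\frac{\partial}{\partial t}+\lambda\right)^2-c^2\frac{\partial^2}{\partial y^2}$ are verified correctly, so no gap remains.
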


We observe that equation \eqref{1} can alternatively be rewritten as follows 
\begin{equation}
\left(\frac{\partial}{\partial t} +\lambda\right)^2\bigg[\frac{\partial^2}{\partial t^2}+
\lambda^2-c^2 \Delta+2\lambda\frac{\partial}{\partial t}\bigg]p+c^4 \frac{\partial^4 p}{\partial x^2 \partial y^2} -\lambda^4 p = 0
\end{equation} 

\begin{os}
Observe that for the projection of the motion on the $x$-axis, the governing equation reads
\begin{equation}
\left(\frac{\partial}{\partial t} +\lambda\right)^2\bigg[\left(\frac{\partial}{\partial t} +\lambda\right)^2-c^2\frac{\partial^2}{\partial x^2} \bigg]p -\lambda^4 p = 0,
\end{equation}
see equation (4.7) of \cite{Leo}. The analysis of the motion projected on the $x$-axis was carried out in this paper 
by using the order statistics technique. In this case, the motion on the line is similar to the 
telegraph process with stops between successive displacements. 

For the planar motion with four orthogonal directions (not cyclic) studied in \cite{enzo}
the equation governing the absolutely continuous component of the distribution has a similar
form, that is 
\begin{equation}
\left(\frac{\partial}{\partial t} +\lambda\right)^2\bigg[\frac{\partial^2}{\partial t^2}-c^2 \Delta+2\lambda\frac{\partial}{\partial t}\bigg]p+c^4 \frac{\partial^4 p}{\partial x^2 \partial y^2} = 0
\end{equation}  
\end{os}

Moreover, if we consider the functions $w_k = f_k e^{-\lambda t}$, $1\leq k \leq 4$, the system \eqref{syst} becomes
\begin{equation}\label{syst1}
\begin{cases}
\frac{\partial w_1}{\partial t} = -c \frac{\partial w_1}{\partial x}+\lambda w_4\\
\frac{\partial w_2}{\partial t} = -c \frac{\partial w_2}{\partial y}+\lambda w_1\\
\frac{\partial w_3}{\partial t} = c \frac{\partial w_3}{\partial x}+\lambda w_2\\
\frac{\partial w_4}{\partial t} = c \frac{\partial w_4}{\partial y}+\lambda w_3
\end{cases}
\end{equation}
and leads to a more simple equation for the function $w(x,y,t) = w_1+w_2+w_3+w_4$ that is
\begin{equation}\label{w}
\left(\frac{\partial^2}{\partial t^2}-c^2\frac{\partial^2}{\partial x^2}\right)\left(\frac{\partial^2}{\partial t^2}-c^2\frac{\partial^2}{\partial y^2}\right)w-\lambda^4 w = 0.
\end{equation}

The equation \eqref{w} can be split into the differential system of two Klein-Gordon-type 
equations
\begin{equation}\label{sys}
\begin{cases}
&\displaystyle \left(\frac{\partial^2}{\partial t^2}-c^2\frac{\partial^2}{\partial x^2}\right) w = \lambda^2 w, \\
& \displaystyle \left(\frac{\partial^2}{\partial t^2}-c^2\frac{\partial^2}{\partial y^2}\right)w = \lambda^2 w.
\end{cases}
\end{equation}
The Fourier transform of $w(x,y,t)$ with respect to $x$ and $y$, namely $\widehat{w}(\alpha,\beta,t)$ therefore satisfies the fourth order equation
\begin{equation}
\frac{d^4 \widehat{w}}{dt^4}-c^2[\alpha^2+\beta^2]\frac{d^2 \widehat{w}}{dt^2}+[c^4\alpha^2\beta^2-\lambda^4]\widehat{w} = 0
\end{equation}

\bigskip

We now examine the conditional characteristic function 
\begin{align}
G_n^{d_j}(\alpha, \beta, t)&=\mathbb{E}\left(e^{i\alpha X(t)+i\beta Y(t)}|N(t)=n, D(0) = d_j\right)=\\ 
\nonumber & = \frac{n!}{t^n}\int_0^t ds_1 \int_{s_1}^t ds_2\dots \int_{s_{n-1}}^t ds_n
\prod_{k=1}^{n+1}e^{ic(s_k-s_{k-1})(\alpha \sin(\frac{k\pi}{2}-(j-1)\frac{\pi}{2})+\beta \cos(\frac{k\pi}{2}-(j-1)\frac{\pi}{2})}\\
\nonumber & = \frac{n!}{t^n}\int_0^t ds_1 \int_{s_1}^t ds_2\dots \int_{s_{n-1}}^t ds_n
\prod_{k=1}^{n+1}e^{ic(s_k-s_{k-1})(\alpha \cos((k-j)\frac{\pi}{2})-\beta \sin((k-j)\frac{\pi}{2}))}\\
\nonumber &=\frac{n!}{t^n}F_n^{d_j}(\alpha, \beta, t), \quad j = 1, 2, 3, 4,
\end{align}
where $0= s_0< s_1<\dots < s_{k+1} = t$.
We have the following result

\begin{te}
The functions $F_n^{d_j}(\alpha, \beta, t)= \frac{t^n}{n!}G_n^{d_j}(\alpha, \beta, t)$ satisfy
the first-order difference-differential equations
\begin{align}
\frac{dF_n^{d_j}}{dt}&= F_{n-1}^{d_j}+ic\{\alpha \cos\{(n+1-j)\frac{\pi}{2}\}-\beta \sin\{(n+1-j)\frac{\pi}{2}\}\}F_n^{d_j}.
\end{align}
The functions $F_n(\alpha, \beta, t) = \frac{1}{4} \sum_{j=1}^4 F_n^{d_j}(\alpha, \beta, t)$
satisfy instead
\begin{equation}
\frac{dF_n}{dt}= F_{n-1}+\frac{ic}{4}\{\alpha\sum_{j=1}^4 \cos\{(n+1-j)\frac{\pi}{2}\} F_n^{d_j}-\beta \sum_{j=1}^4\sin\{(n+1-j)\frac{\pi}{2}\}F_n^{d_j}\}.
\end{equation}
\end{te}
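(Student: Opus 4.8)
The plan is to differentiate the iterated integral defining $F_n^{d_j}$ directly, treating it as an integral over an ordered simplex whose top face moves with $t$, and then to obtain the averaged identity simply by summing over the four initial directions. Writing $s_0=0$, $s_{n+1}=t$ and abbreviating the phase accumulated on the $k$-th segment by
\begin{equation}\nonumber
\varphi_k^{(j)} = \alpha\cos\Big((k-j)\tfrac{\pi}{2}\Big)-\beta\sin\Big((k-j)\tfrac{\pi}{2}\Big),
\end{equation}
I would first recast the definition as an integral over the simplex $\Delta_n(t)=\{0<s_1<\cdots<s_n<t\}$,
\begin{equation}\nonumber
F_n^{d_j}(\alpha,\beta,t)=\int_{\Delta_n(t)}\ \prod_{k=1}^{n+1}e^{\,ic(s_k-s_{k-1})\varphi_k^{(j)}}\,ds_1\cdots ds_n .
\end{equation}
The essential observation is that $t$ enters this expression in exactly two ways: through the single domain constraint $s_n<t$ (the remaining inequalities $s_1<\cdots<s_n$ do not involve $t$), and through the last factor $e^{\,ic(t-s_n)\varphi_{n+1}^{(j)}}$ of the product, which is the only factor depending on $t$ explicitly.

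Next I would apply the Leibniz rule for differentiation over a domain with a moving boundary, which splits $\frac{d}{dt}F_n^{d_j}$ into a boundary contribution and an interior contribution. Since only the face $\{s_n=t\}$ of $\Delta_n(t)$ moves as $t$ grows, the boundary term is the integrand evaluated at $s_n=t$ and integrated over $\Delta_{n-1}(t)$. Setting $s_n=t$ makes the last factor collapse, $e^{\,ic(t-s_n)\varphi_{n+1}^{(j)}}\big|_{s_n=t}=1$, while the factor indexed by $k=n$ becomes $e^{\,ic(t-s_{n-1})\varphi_n^{(j)}}$; the remaining object is precisely the integral defining $F_{n-1}^{d_j}$ (its final time being $s_n=t$, with the same phases $\varphi_1^{(j)},\dots,\varphi_n^{(j)}$). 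The interior term comes from differentiating the only $t$-dependent factor, using $\partial_t\, e^{\,ic(t-s_n)\varphi_{n+1}^{(j)}}=ic\,\varphi_{n+1}^{(j)}\,e^{\,ic(t-s_n)\varphi_{n+1}^{(j)}}$, so the constant $ic\,\varphi_{n+1}^{(j)}$ factors out of the integral and leaves $ic\,\varphi_{n+1}^{(j)}\,F_n^{d_j}$. Combining the two and recalling $\varphi_{n+1}^{(j)}=\alpha\cos((n+1-j)\tfrac{\pi}{2})-\beta\sin((n+1-j)\tfrac{\pi}{2})$ yields the stated first-order difference-differential equation, with the natural convention $F_{-1}^{d_j}\equiv 0$ serving as the base case (for $n=0$ one checks directly that $F_0^{d_j}=e^{\,ict\varphi_1^{(j)}}$ satisfies it).

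Finally, the equation for $F_n=\tfrac14\sum_{j=1}^4 F_n^{d_j}$ follows by linearity: averaging the four identities, the boundary terms combine into $\tfrac14\sum_j F_{n-1}^{d_j}=F_{n-1}$, while the interior terms retain their $j$-dependent coefficients and remain inside the sum, giving exactly the displayed expression. The only point requiring genuine care is the bookkeeping in the boundary term: one must verify that among all the upper limits written as $t$ only the constraint $s_n<t$ is effectively $t$-dependent, so that a single face moves, and that the collapse of the last exponential together with the matching of the phase labels $\varphi_k^{(j)}$ for $k\le n$ identifies the boundary contribution with $F_{n-1}^{d_j}$ and not with some reindexed variant.
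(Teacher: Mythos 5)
Your proof is correct. The paper states this theorem without giving any proof at all, so there is nothing to compare against; your argument --- writing $F_n^{d_j}$ as an integral over the simplex $\Delta_n(t)$ and applying the Leibniz rule, with the moving face $\{s_n=t\}$ yielding $F_{n-1}^{d_j}$ (the factor $e^{ic(t-s_n)\varphi_{n+1}^{(j)}}$ collapsing to $1$ and the phases $\varphi_k^{(j)}$, $k\le n$, being independent of $n$ so no reindexing occurs) and the differentiation of the sole explicitly $t$-dependent factor yielding $ic\,\varphi_{n+1}^{(j)}F_n^{d_j}$ --- is the natural derivation, and you correctly isolate and settle the one delicate point, namely that among the iterated upper limits only $s_n<t$ is an effective constraint, the boundary contributions from $s_m=t$, $m<n$, vanishing because they annihilate the inner integrals; the averaged equation then indeed follows by plain linearity, the $j$-dependent coefficients preventing any further collapse of the second term.
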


\section{The explicit distribution}

We first observe that, for
\begin{equation}\label{change}
(x,y): 
\begin{cases}
x+y =\pm u, \quad |x-y|<u,\\
x-y = \pm u, \quad |x+y|<u,
\end{cases}
\end{equation}
the distribution \eqref{pd} is uniform on the square layer $S_{u}$ depicted in Fig.2 which is
homothetic with the support square $S_{ct}$.
  \begin{figure}
            \centering
            \includegraphics[scale=.73]{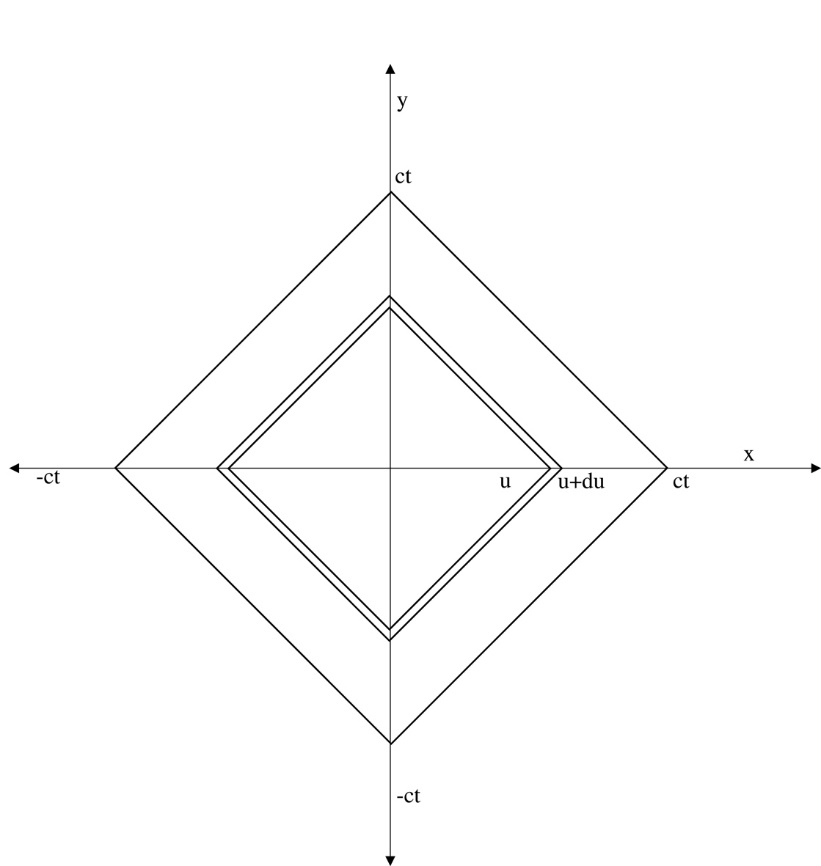}
            \caption{The strips $S_u$ omothetic to the support set $S_{ct}$ are depicted. We denoted by $u$ the half-diagonal of $S_u$. The process $\mathcal{U}(t)$ takes the values $u$ and has distribution $p(u,t)$ given by \eqref{dist}.
           .}
            \label{figura2}
        \end{figure}
By means of the change of variable $u = \pm x\pm y$, we reduce equation \eqref{w} to the differential system
of Klein-Gordon-type equations \eqref{sys}.
Note that the time-derivatives of the function $\displaystyle I_0\left(\frac{\lambda}{c}\sqrt{c^2t^2-u^2}\right)$
are also solutions of equations \eqref{sys} because of the homogeneity of 
\begin{equation}\label{star}
\frac{\partial^2 w}{\partial t^2}-c^2 \frac{\partial^2 w}{\partial u^2}-\lambda^2 w= 0.
\end{equation}
 Therefore, the solution to \eqref{w}, in view of \eqref{sys}, can be written in the form 
\begin{equation}\label{sol}
w(u,t)= A I_0\left(\frac{\lambda}{c}\sqrt{c^2t^2-u^2}\right)+B\frac{\partial}{\partial t}I_0\left(\frac{\lambda}{c}\sqrt{c^2t^2-u^2}\right)+
C \frac{\partial^2}{\partial t^2}I_0\left(\frac{\lambda}{c}\sqrt{c^2t^2-u^2}\right),
\end{equation}
where $A, B, C$ are real coefficients that will be defined in the following and 
\begin{equation}
I_0(t) = \sum_{k=0}^\infty \left(\frac{t}{2}\right)^{2k}\frac{1}{k!^2},
\end{equation}
is the modified Bessel function.\\
We have the following result

\begin{te}
The absolutely continuous component of the distribution \eqref{pd} has density
\begin{align}\label{dist}
p(u,t) &= \frac{e^{-\lambda t}}{c}\left[-\lambda I_0\left(\frac{\lambda}{c}\sqrt{c^2t^2-u^2}\right)+\frac{\partial}{\partial t}I_0\left(\frac{\lambda}{c}\sqrt{c^2t^2-u^2}\right)+\frac{2}{\lambda}\frac{\partial^2}{\partial t^2}I_0\left(\frac{\lambda}{c}\sqrt{c^2t^2-u^2}\right)\right]\\
\nonumber & =\frac{e^{-\lambda t}}{c}\left[\lambda I_0\left(\frac{\lambda}{c}\sqrt{c^2t^2-u^2}\right)+\frac{\partial}{\partial t}I_0\left(\frac{\lambda}{c}\sqrt{c^2t^2-u^2}\right)+\frac{2c^2}{\lambda}\frac{\partial^2}{\partial u^2}I_0\left(\frac{\lambda}{c}\sqrt{c^2t^2-u^2}\right)\right]  ,
\end{align} 
where $0<u<ct$.
\end{te}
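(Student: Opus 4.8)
The plan is to treat \eqref{sol} as an ansatz for $w$ with undetermined real constants $A,B,C$ and to determine them, the absolutely continuous density being sought in the form $p(u,t)=\frac{e^{-\lambda t}}{c}\,w(u,t)$ suggested by the reduction of \eqref{w} to the Klein--Gordon equation \eqref{star} via $u=\pm x\pm y$ and the substitution $w_k=f_k e^{-\lambda t}$. Before fixing the constants I would dispose of the easy half of the statement, the equivalence of the two displayed forms of \eqref{dist}. Applying \eqref{star} to $I_{0}\!\left(\frac{\lambda}{c}\sqrt{c^{2}t^{2}-u^{2}}\right)$ gives $\partial_{t}^{2}I_{0}=c^{2}\partial_{u}^{2}I_{0}+\lambda^{2}I_{0}$; inserting this into the first line turns $\frac{2}{\lambda}\partial_{t}^{2}I_{0}$ into $2\lambda I_{0}+\frac{2c^{2}}{\lambda}\partial_{u}^{2}I_{0}$, which together with $-\lambda I_{0}$ reproduces the second line. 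This is a one-line computation.

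The substantive part is the determination of $A,B,C$. The device I would use is that the normalization of the absolutely continuous component,
\begin{equation}\nonumber
\int_{0}^{ct}p(u,t)\,du=\Pr\{N(t)\ge 2\}=1-e^{-\lambda t}-\lambda t\,e^{-\lambda t},
\end{equation}
is not a single scalar constraint but an identity in $t$; equating the two sides as functions of $t$ and using the linear independence of $\cosh\lambda t$, $\sinh\lambda t$, $t$ and $1$ produces exactly the three equations needed. I would therefore compute $\int_{0}^{ct}\partial_{t}^{j}I_{0}\,du$ for $j=0,1,2$. For $j=0$ the substitution $u=ct\sin\theta$ together with the series for $I_{0}$ and the elementary value $\int_{0}^{\pi/2}I_{0}(z\cos\theta)\cos\theta\,d\theta=\frac{\sinh z}{z}$ yields $\int_{0}^{ct}I_{0}\,du=\frac{c}{\lambda}\sinh\lambda t$, and the cases $j=1,2$ follow by differentiating in $t$ through the Leibniz rule.

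The only delicate point, and the one I expect to be the main obstacle, is the boundary contribution generated by the Leibniz rule at $u=ct$, where the Bessel argument $\frac{\lambda}{c}\sqrt{c^{2}t^{2}-u^{2}}$ degenerates to zero. Using $I_{0}(0)=1$ and $I_{1}(z)\sim z/2$ one gets the boundary values $I_{0}\big|_{u=ct}=1$ and $\partial_{t}I_{0}\big|_{u=ct}=\frac{\lambda^{2}t}{2}$, whence
\begin{equation}\nonumber
\int_{0}^{ct}\partial_{t}I_{0}\,du=c(\cosh\lambda t-1),\qquad \int_{0}^{ct}\partial_{t}^{2}I_{0}\,du=c\lambda\sinh\lambda t-\tfrac12 c\lambda^{2}t .
\end{equation}
Substituting into the normalization identity and matching coefficients gives $B=1$ (from $\cosh\lambda t$, consistent with the constant term), $\tfrac12 C\lambda^{2}=\lambda$ (from $t$), and $\frac{A}{\lambda}+C\lambda=1$ (from $\sinh\lambda t$); solving yields $A=-\lambda$, $B=1$, $C=\frac{2}{\lambda}$, precisely the coefficients in \eqref{dist}.

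As an independent confirmation I would expand the resulting $p(u,t)$ in the Poisson series $e^{\lambda t}p(u,t)=\sum_{n\ge 2}\frac{(\lambda t)^{n}}{n!}\,g_{n}(u,t)$ and check that the first coefficients reproduce the conditional laws $g_{2},g_{3}$ obtained from the difference--differential recursion for $F_{n}$ established above; this also verifies that no spurious $N(t)=0$ or $N(t)=1$ mass has leaked into the absolutely continuous part.
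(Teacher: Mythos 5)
Your proposal is correct and follows essentially the same route as the paper's proof: the Klein--Gordon ansatz \eqref{sol} with $p=e^{-\lambda t}w$, determination of the three coefficients by imposing the normalization \eqref{co} and matching the linearly independent functions $\cosh\lambda t$, $\sinh\lambda t$, $t$ and $1$, and the identity $\partial_t^2 I_0 = c^2\partial_u^2 I_0+\lambda^2 I_0$ to pass to the second form of \eqref{dist}. The only differences are presentational: you derive the three integrals (including the boundary value $\partial_t I_0\big|_{u=ct}=\lambda^2 t/2$ from the Leibniz rule) where the paper simply quotes them, and your values agree exactly with the paper's, yielding the same $A=-\lambda$, $B=1$, $C=2/\lambda$.
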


\begin{proof}
It is straightforward that equations \eqref{sys} after the change of variable \eqref{change}
are reduced to the form \eqref{star} which is satisfied by \eqref{sol}. This is a linear combination of non-negative functions. Going back to 
$p(u,t) = e^{-\lambda t} w(u,t)$, we must now find the coefficients $A, B$ and $C$ such that 
\begin{equation}\label{co}
\int_{0}^{ct} p(u,t)du = 1-e^{-\lambda t}- \lambda t \cdot e^{-\lambda t},
\end{equation}
since we have that a singular part of the distribution is concentrated on the boundary and pertains to the probability that the particle has one or zero changes of direction.\\
By using the following results
\begin{align}
&\int_0^{ct} I_0\left(\frac{\lambda}{c}\sqrt{c^2t^2-u^2}\right) du = \frac{c}{2\lambda}(e^{\lambda t}-e^{-\lambda t}),\\
&\int_0^{ct}\frac{\partial}{\partial t}I_0\left(\frac{\lambda}{c}\sqrt{c^2t^2-u^2}\right) du = 
\frac{c}{2}(e^{\lambda t}+e^{-\lambda t})-c,\\
& \int_0^{ct} \frac{\partial^2}{\partial t^2}I_0\left(\frac{\lambda}{c}\sqrt{c^2t^2-u^2}\right) du = \frac{c\lambda}{2}(e^{\lambda t}-e^{-\lambda t})-\frac{c\lambda^2 t}{2}
\end{align}
under the constraint \eqref{co} we finally obtain the claimed result.

The second form of \eqref{dist} can be obtained by considering that
\begin{equation}
\nonumber \frac{\partial^2}{\partial t^2}I_0\left(\frac{\lambda}{c}\sqrt{c^2t^2-u^2}\right)=
c^2 \frac{\partial^2}{\partial u^2} I_0\left(\frac{\lambda}{c}\sqrt{c^2t^2-u^2}\right)+
\lambda^2 I_0\left(\frac{\lambda}{c}\sqrt{c^2t^2-u^2}\right).
\end{equation}

\end{proof}

\begin{os}
We can write the distribution \eqref{dist} in two alternative forms.
The first one is expressed in terms of the functions $I_0(\cdot)$ and $I_1(\cdot)$ as follows
\begin{align}
p(u,t) &= e^{-\lambda t}\bigg[\frac{\lambda}{c}\cdot\frac{c^2t^2+u^2}{c^2t^2-u^2} \cdot I_0\left(\frac{\lambda}{c}\sqrt{c^2t^2-u^2}\right)+\\
\nonumber &+\frac{1}{\sqrt{c^2t^2-u^2}}\bigg\{
\frac{\lambda t(c^2t^2-u^2)-2(u^2+c^2t^2)}{c^2t^2-u^2}\bigg\} I_1\left(\frac{\lambda}{c}\sqrt{c^2t^2-u^2}\right)\bigg].
\end{align}
A second useful representation of the distribution \eqref{dist} is given by 
\begin{equation}
p(u,t) = \frac{\lambda}{c} \frac{e^{-\lambda t}}{c^2t^2-u^2}\sum_{k=0}^{\infty}
\left(\frac{\lambda}{2c}\right)^{2k}\frac{(\sqrt{c^2t^2-u^2})^{2k}}{k!^2}\bigg[(c^2t^2+u^2)\left(1-\frac{1}{k+1}\right)+\frac{\lambda t(c^2t^2-u^2)}{2(k+1)}\bigg],
\end{equation}
from which the non-negativity of \eqref{dist} can be directly ascertained.
\end{os}

Let us denote with $\mathcal{U}(t)$ the random variable whose density law is represented by \eqref{dist}. We can now explicitly compute the mean value of $\mathcal{U}(t)$.
\begin{te}
We have the following result
\begin{equation}\label{mean}
\mathbb{E} \ \mathcal{U}(t) = e^{-\lambda t}\cdot\bigg[\left(ct+\frac{2c}{\lambda}\right)I_0(\lambda t)+ct I_1(\lambda t)-\frac{2c}{\lambda}\bigg]
\end{equation}
\end{te}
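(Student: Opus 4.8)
The plan is to evaluate $\mathbb{E}\,\mathcal{U}(t)$ as a sum of two contributions. The half-diagonal process $\mathcal{U}(t)$ is not purely absolutely continuous: the density \eqref{dist} integrates to $1-e^{-\lambda t}(1+\lambda t)$, the remaining mass $e^{-\lambda t}(1+\lambda t)=\Pr\{N(t)\le 1\}$ being a singular atom on the boundary $\partial S_{ct}$, where the half-diagonal equals $ct$. Hence $\mathbb{E}\,\mathcal{U}(t)=\int_0^{ct}u\,p(u,t)\,du+ct\,e^{-\lambda t}(1+\lambda t)$. I want to stress at the outset that this boundary term is indispensable: the continuous integral alone will produce spurious $-ct$ and $-c\lambda t^2$ summands, and it is exactly the atomic contribution that cancels them.

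Writing $\Theta(u,t)=I_0\!\left(\frac{\lambda}{c}\sqrt{c^2t^2-u^2}\right)$ so that $p=\frac{e^{-\lambda t}}{c}\big[-\lambda\,\Theta+\partial_t\Theta+\frac{2}{\lambda}\partial_t^2\Theta\big]$, the continuous part reduces to the three moments $J_i=\int_0^{ct}u\,\partial_t^{\,i}\Theta\,du$ for $i=0,1,2$. First I would compute $J_0$ directly: the substitution $s=c^2t^2-u^2$ turns it into $\tfrac12\int_0^{c^2t^2}I_0(\frac{\lambda}{c}\sqrt{s})\,ds$, and after $r=\sqrt s$ the identity $\int_0^X x I_0(x)\,dx=X I_1(X)$ gives $J_0=\frac{c^2t}{\lambda}I_1(\lambda t)$.

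The key device for $J_1$ and $J_2$ is to recover them by differentiating $J_0$ in $t$ rather than integrating afresh. Because the upper limit $ct$ depends on $t$, the Leibniz rule contributes boundary terms, $\frac{d}{dt}J_i=J_{i+1}+c\,[u\,\partial_t^{\,i}\Theta]_{u=ct}$. For $i=0$ the boundary value is $ct\cdot\Theta|_{u=ct}=ct$ (since $\Theta=I_0(0)=1$), yielding $J_1=\frac{d}{dt}J_0-c^2t$. For $i=1$ I must evaluate the delicate limit $\lim_{u\to ct}\partial_t\Theta=\frac{\lambda^2 t}{2}$, obtained from $\partial_t\Theta=I_1(\phi)\,\partial_t\phi$ with $\phi=\frac{\lambda}{c}\sqrt{c^2t^2-u^2}$, using $I_1(\phi)\sim\phi/2$ to resolve the $0\cdot\infty$ indeterminacy; this gives $J_2=\frac{d}{dt}J_1-\frac{c^2\lambda^2t^2}{2}$. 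I expect this boundary limit to be the main obstacle, as it is the one place where a naive interchange of derivative and integral fails.

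The rest is routine differentiation simplified by the recurrences $I_0'=I_1$ and $I_1'=I_0-\frac1x I_1$, which collapse $\frac{d}{dt}J_0$ to $c^2t\,I_0(\lambda t)$, whence $J_1=c^2t[I_0(\lambda t)-1]$ and $J_2=c^2I_0(\lambda t)+c^2\lambda t\,I_1(\lambda t)-c^2-\frac{c^2\lambda^2t^2}{2}$. Substituting into $\frac{e^{-\lambda t}}{c}\big[-\lambda J_0+J_1+\frac{2}{\lambda}J_2\big]$ and adding the singular boundary contribution $ct\,e^{-\lambda t}(1+\lambda t)$, the $I_0$ terms combine to $(ct+\frac{2c}{\lambda})I_0(\lambda t)$, the $I_1$ terms to $ct\,I_1(\lambda t)$, and the polynomial remainders cancel to the single constant $-\frac{2c}{\lambda}$, producing exactly \eqref{mean}.
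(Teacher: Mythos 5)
Your proof is correct and takes essentially the same approach as the paper: both decompose $\mathbb{E}\,\mathcal{U}(t)$ into $\int_0^{ct}u\,p(u,t)\,du$ plus the singular boundary contribution $ct\,e^{-\lambda t}(1+\lambda t)$, and both rest on exactly the three moment integrals $J_0=\frac{c^2t}{\lambda}I_1(\lambda t)$, $J_1=c^2t\,I_0(\lambda t)-c^2t$ and $J_2=c^2I_0(\lambda t)+c^2\lambda t\,I_1(\lambda t)-c^2-\frac{c^2\lambda^2t^2}{2}$. The only difference is that the paper states these integrals without derivation, whereas you obtain $J_1$ and $J_2$ from $J_0$ via the Leibniz rule, correctly supplying the boundary limits $\Theta|_{u=ct}=1$ and $\partial_t\Theta|_{u=ct}=\lambda^2t/2$ — a detail the paper omits.
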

\begin{proof}
The result \eqref{mean} can be obtained by taking into account the following formulas
\begin{align}
\nonumber & \int_0^{ct} u I_0\left(\frac{\lambda}{c}\sqrt{c^2t^2-u^2}\right)du = \frac{c^2 t}{\lambda} I_1(\lambda t),\\
\nonumber & \int_0^{ct} u \frac{\partial}{\partial t}I_0\left(\frac{\lambda}{c}\sqrt{c^2t^2-u^2}\right) du = c^2 t I_0(\lambda t)-c^2 t,\\
\nonumber & \int_0^{ct} u\frac{\partial^2}{\partial t^2}I_0\left(\frac{\lambda}{c}\sqrt{c^2t^2-u^2}\right) du = c^2 I_0(\lambda t)+c^2 t\lambda
I_1(\lambda t)-c^2-\frac{\lambda^2 c^2 t^2}{2}
\end{align}
and by adding the mean value part related to the singular component of the distribution.

\end{proof}

Observe that for $t\rightarrow 0$ we have that
\begin{equation}
\mathbb{E} \ \mathcal{U}(t) \sim ct.
\end{equation}

We now evaluate the $m$-th moment of the r.v. $\mathcal{U}(t)$ with distribution \eqref{dist}.
We need the following results
\begin{align}
\int_0^{ct} u^m I_0\left(\frac{\lambda}{c}\sqrt{c^2t^2-u^2}\right)du &= 
\frac{1}{2}\Gamma\left(\frac{m+1}{2}\right)\left(\frac{2c^2t}{\lambda}\right)^{\frac{m+1}{2}}
I_{\frac{m+1}{2}}(\lambda t)\\
\nonumber\label{mth} \int_0^{ct} u^m \frac{\partial}{\partial t}I_0\left(\frac{\lambda}{c}\sqrt{c^2t^2-u^2}\right) du &=
\frac{1}{2t}\Gamma\left(\frac{m+1}{2}+1\right)\left(\frac{2c^2t}{\lambda}\right)^{\frac{m+1}{2}}
I_{\frac{m+1}{2}}(\lambda t)+\\
&+\frac{1}{2}\Gamma\left(\frac{m+1}{2}\right)\left(\frac{2c^2t}{\lambda}\right)^{\frac{m+1}{2}}\frac{\lambda}{2}\bigg[I_{\frac{m-1}{2}}(\lambda t)+I_{\frac{m+3}{2}}(\lambda t)\bigg]-c(ct)^m.
\end{align}
We simplify \eqref{mth} by means of the relationship
\begin{equation}
\nonumber I_{\frac{m+3}{2}}(\lambda t) = I_{\frac{m-1}{2}}(\lambda t)-\frac{2}{\lambda t}\frac{m+1}{2}I_{\frac{m+1}{2}}(\lambda t),
\end{equation}
which for $m = 2r-1$ yields the classical recurrence relationship for integer-order
Bessel functions
\begin{equation}
\nonumber I_{r+1}(\lambda t) = I_{r-1}(\lambda t)-\frac{2r}{\lambda t}I_r(\lambda t).
\end{equation}
Therefore we have that \eqref{mth} can be reduced to the following form
\begin{equation}
 \int_0^{ct} u^m \frac{\partial}{\partial t}I_0\left(\frac{\lambda}{c}\sqrt{c^2t^2-u^2}\right) du =
 \frac{\lambda}{2}\left(\frac{2c^2t}{\lambda}\right)^{\frac{m+1}{2}}\Gamma\left(\frac{m+1}{2}\right)I_{\frac{m-1}{2}}(\lambda t)-c(ct)^m.
\end{equation}
In order to evaluate the integral
\begin{equation}
\nonumber \int_0^{ct} u^m \frac{\partial^2}{\partial t^2}I_0\left(\frac{\lambda}{c}\sqrt{c^2t^2-u^2}\right) du
\end{equation}
we need again the following equality
\begin{equation}\label{kgg}
\frac{\partial^2}{\partial t^2}I_0\left(\frac{\lambda}{c}\sqrt{c^2t^2-u^2}\right) = c^2
\frac{\partial^2}{\partial u^2}I_0\left(\frac{\lambda}{c}\sqrt{c^2t^2-u^2}\right) +\lambda^2 I_0\left(\frac{\lambda}{c}\sqrt{c^2t^2-u^2}\right).
\end{equation}
Since
\begin{align}\nonumber 
&c^2 \nonumber \int_0^{ct} u^m \frac{\partial^2}{\partial u^2}I_0\left(\frac{\lambda}{c}\sqrt{c^2t^2-u^2}\right) du = -\frac{\lambda^2ct}{2}
(ct)^m-mc^2(ct)^{m-1}+\\
\nonumber &+m\Gamma\left(\frac{m+1}{2}\right)c^2 \left(\frac{2c^2t}{\lambda}\right)^{\frac{m-1}{2}}I_{\frac{m-1}{2}}(\lambda t),
\end{align}
we have that
\begin{align}\nonumber
& \int_0^{ct} u^m \frac{\partial^2}{\partial t^2}I_0\left(\frac{\lambda}{c}\sqrt{c^2t^2-u^2}\right) du = -\frac{\lambda^2ct}{2}
 (ct)^m-mc^2(ct)^{m-1}+\\
 \nonumber &+m\Gamma\left(\frac{m+1}{2}\right)c^2 \left(\frac{2c^2t}{\lambda}\right)^{\frac{m-1}{2}}I_{\frac{m-1}{2}}(\lambda t)+ \frac{\lambda^2}{2}\Gamma\left(\frac{m+1}{2}\right)\left(\frac{2c^2t}{\lambda}\right)^{\frac{m+1}{2}}
 I_{\frac{m+1}{2}}(\lambda t)
\end{align}
At this point we have that the $m$-th moment of the absolutely continuous part of the distribution
of $\mathcal{U}$, namely $f(u)$, is given by
\begin{align}
\nonumber & \int_0^{ct} u^m f(u)du = \frac{e^{-\lambda t}}{c}\bigg[\frac{\lambda}{2}\Gamma\left(\frac{m+1}{2}\right)\left(\frac{2c^2t}{\lambda}\right)^{\frac{m+1}{2}}
I_{\frac{m+1}{2}}(\lambda t)-c(ct)^m-\\
\nonumber & -\lambda (ct)^{m+1}-\frac{2}{\lambda}mc^2(ct)^{m-1}+ \Gamma\left(\frac{m+1}{2}\right) I_{\frac{m-1}{2}}(\lambda t)\left(\frac{\lambda}{2}\left(\frac{2c^2t}{\lambda}\right)^{\frac{m+1}{2}}+\frac{2mc^2}{\lambda}\left(\frac{2c^2t}{\lambda}\right)^{\frac{m-1}{2}}\right)
\bigg].
\end{align}
The contribution of the singular part of the distribution equals
\begin{equation}
(ct)^m e^{-\lambda t} (1+\lambda t).
\end{equation}
Taking all these results together, we are finally able to write the $m$-th moment of the r.v. $\mathcal{U}(t)$, that is given by 
\begin{align}
\nonumber &\mathbb{E} \ \mathcal{U}^m (t) =\frac{e^{-\lambda t}}{c}\bigg[\frac{\lambda}{2}\Gamma\left(\frac{m+1}{2}\right)\left(\frac{2c^2t}{\lambda}\right)^{\frac{m+1}{2}}
I_{\frac{m+1}{2}}(\lambda t)-\frac{2}{\lambda}mc^2(ct)^{m-1}+\\
\nonumber & +\Gamma\left(\frac{m+1}{2}\right) I_{\frac{m-1}{2}}(\lambda t)\left(\frac{\lambda}{2}\left(\frac{2c^2t}{\lambda}\right)^{\frac{m+1}{2}}+\frac{2mc^2}{\lambda}\left(\frac{2c^2t}{\lambda}\right)^{\frac{m-1}{2}}\right)
\bigg].
\end{align}
The reader can check that for $m = 1$, we get \eqref{mean}.

A useful result is given by the explicit conditional distributions of $\mathcal{U}(t)$, $t>0$,
which describe the behaviour of motion when the number $N(t)$ of the deviations is fixed. This
is particularly interesting for small values of $N(t)$.

\begin{te}
The conditional distributions of $\mathcal{U}(t)$, $t>0$, are 
\begin{equation}\label{3,1}
\Pr\{\mathcal{U}(t)\in du| N(t) = 2k+2\} =\frac{(2k+2)!}{k!(k+1)!}\frac{du}{(2ct)^{2k+1}}(c^2t^2-u^2)^k, \ k\geq 0 
\end{equation}
with $0\leq |u| \leq ct$ and 
\begin{equation}
\Pr\{\mathcal{U}(t)\in du| N(t) = 2k+1\} =\frac{(2k+1)!}{2^{2k}(ct)^{2k+1}du}\frac{(c^2t^2-u^2)^{k-1}}{(k-1)!(k+1)!}(c^2t^2+u^2), \ k\geq 1 
\end{equation}
\end{te}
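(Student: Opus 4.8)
The plan is to recover each conditional law by reading it off from the unconditional density \eqref{dist} through a Poisson mixture argument. The starting point is the decomposition
\[
p(u,t)=\sum_{n\ge 2}\Pr\{N(t)=n\}\,f_n(u),\qquad f_n(u)=\frac{\Pr\{\mathcal{U}(t)\in du\mid N(t)=n\}}{du},
\]
where the sum starts at $n=2$ because the events $\{N(t)=0\}$ and $\{N(t)=1\}$ carry the singular mass on $\partial S_{ct}$, as already noted. The crucial structural remark is that each $f_n(u)$ does \emph{not} depend on $\lambda$: conditionally on $N(t)=n$, the switching instants are distributed as the order statistics of $n$ independent uniforms on $(0,t)$, a law free of the Poisson rate. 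Writing $\Pr\{N(t)=n\}=e^{-\lambda t}(\lambda t)^{n}/n!$ and multiplying through by $e^{\lambda t}$, the identity becomes $e^{\lambda t}p(u,t)=\sum_{n\ge 2}\frac{(\lambda t)^{n}}{n!}f_n(u)$, a genuine power series in $\lambda$ whose coefficients are the unknowns $t^{n}f_n(u)/n!$. Hence $f_n(u)=\frac{n!}{t^{n}}[\lambda^{n}]\,\big(e^{\lambda t}p(u,t)\big)$.

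I would then substitute the second representation of $p(u,t)$ from the preceding Remark, which after cancelling $e^{-\lambda t}$ reads
\[
e^{\lambda t}p(u,t)=\frac{\lambda}{c\,(c^2t^2-u^2)}\sum_{k=0}^{\infty}\Big(\frac{\lambda}{2c}\Big)^{2k}\frac{(c^2t^2-u^2)^{k}}{k!^2}\Big[(c^2t^2+u^2)\tfrac{k}{k+1}+\tfrac{\lambda t\,(c^2t^2-u^2)}{2(k+1)}\Big].
\]
The two summands inside the bracket have opposite parity in $\lambda$: the term carrying $(c^2t^2+u^2)$ produces the odd powers $\lambda^{2k+1}$, while the term carrying $\lambda t(c^2t^2-u^2)$ produces the even powers $\lambda^{2k+2}$. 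Consequently the even conditional laws are read off from the second summand and the odd ones from the first, and the two claimed families decouple cleanly.

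For the even case I would collect the coefficient of $\lambda^{2k+2}$ from the second summand; the normalisation $f_{2k+2}=(2k+2)!\,t^{-(2k+2)}[\lambda^{2k+2}](\cdots)$ yields something proportional to $(c^2t^2-u^2)^{k}/(2ct)^{2k+1}$, and the prefactor $\tfrac{(2k+2)!}{k!^2(k+1)}$ collapses to $\tfrac{(2k+2)!}{k!(k+1)!}$, giving the first formula. For the odd case the coefficient of $\lambda^{2k+1}$ in the first summand carries the factor $k/(k+1)$; the $k=0$ term vanishes, so the series starts at $k=1$ (whence $n=1$ correctly gets no absolutely continuous mass), and $\tfrac{k}{k!^2(k+1)}$ simplifies to $\tfrac{1}{(k-1)!(k+1)!}$, producing $\tfrac{(2k+1)!}{2^{2k}(ct)^{2k+1}}\tfrac{(c^2t^2-u^2)^{k-1}}{(k-1)!(k+1)!}(c^2t^2+u^2)$, the second formula.

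The only genuine subtlety, and the step I would be most careful about, is the legitimacy of the term-by-term identification of coefficients, that is, the uniqueness of the expansion of $e^{\lambda t}p(u,t)$ as a power series in $\lambda$ with $\lambda$-free coefficients $t^{n}f_n(u)/n!$. This rests precisely on the order-statistics characterisation invoked above, which guarantees that $f_n(u)$ is independent of $\lambda$; granting that, matching powers of $\lambda$ is unambiguous and what remains is the purely algebraic simplification of the Gamma/factorial factors and the powers of $c$ and $t$, together with the routine verification that each $f_n$ integrates to one over $0\le u\le ct$ (which re-derives the normalisation already fixed in \eqref{co}). An alternative, more self-contained route would bypass \eqref{dist} and compute $f_n(u)$ directly from the conditional characteristic functions $F_n^{d_j}$ of the previous section, but this would require inverting the Fourier transform in $(\alpha,\beta)$ and is considerably more laborious than the coefficient extraction above.
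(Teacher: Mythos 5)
Your proposal is correct and follows essentially the same route as the paper: the paper's proof also expands the explicit density \eqref{dist} as a power series in $\lambda$, regroups the terms against the Poisson weights $e^{-\lambda t}(\lambda t)^n/n!$, and reads off the $\lambda$-free factors as the conditional densities, with the cancellation you absorb by starting from the Remark's consolidated series being carried out by hand there (the $k=0$ term of the odd series against the first term coming from $\frac{2c^2}{\lambda}\partial_u^2 I_0$). The one point where you go beyond the paper is making explicit the order-statistics argument that the conditional laws are $\lambda$-free, which is what legitimizes matching coefficients and is left implicit in the paper's proof.
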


\begin{proof}
It is convenient to write the distribution \eqref{dist} as follows 
\begin{equation}\label{con1}
p(u,t)= \frac{e^{-\lambda t}}{c}\left[\lambda I_0\left(\frac{\lambda}{c}\sqrt{c^2t^2-u^2}\right)+\frac{\partial}{\partial t}I_0\left(\frac{\lambda}{c}\sqrt{c^2t^2-u^2}\right)+\frac{2c^2}{\lambda}\frac{\partial^2}{\partial u^2}I_0\left(\frac{\lambda}{c}\sqrt{c^2t^2-u^2}\right)\right]
\end{equation}  
and examine each term separately.\\
The first term of \eqref{con1} can be conveniently rewritten as 
\begin{align}\label{con2}
&e^{-\lambda t} \sum_{k=0}^\infty \frac{(\lambda t)^{2k+1}}{(2k+1)!}\bigg\{\frac{(2k+1)!}{(ct)^{2k+1}2^{2k}}\frac{1}{k!^2}(c^2t^2-u^2)^k\bigg\}=\\
\nonumber & =\sum_{k=0}^\infty \Pr\{N(t)=2k+1\}\Pr\{\mathcal{U}(t)\in du| N(t) = 2k+1\}.
\end{align}
The second term can be rewritten as
\begin{align}
& \frac{e^{-\lambda t}}{c}\frac{\partial}{\partial t} \sum_{k=0}^\infty \left(\frac{\lambda}{2c}\right)^{2k}\frac{1}{k!^2}(c^2t^2-u^2)^k = \\
\nonumber &=\frac{e^{-\lambda t}}{c}\sum_{k=0}^\infty \left(\frac{\lambda}{2c}\right)^{2k}\frac{k}{k!^2}2c^2t(c^2t^2-u^2)^{k-1}=\\
\nonumber & =e^{-\lambda t}(2ct)\sum_{k=0}^\infty \left(\frac{\lambda}{2c}\right)^{2k+2}\frac{1}{k!(k+1)!}(c^2t^2-u^2)^{k}=\\
\nonumber & =e^{-\lambda t}\sum_{k=0}^\infty \frac{(\lambda t)^{2k+2}}{(2k+2)!}
\frac{(2k+2)!}{k!(k+1)!}\frac{1}{(2ct)^{2k+1}}(c^2t^2-u^2)^k=\\
\nonumber &= \sum_{k=0}^\infty  \Pr\{N(t)=2k+2\}\Pr\{\mathcal{U}(t)\in du| N(t) = 2k+2\}
\end{align}

Many more details are requested to analyze the third term which can be developed as follows
\begin{align}
\nonumber & e^{-\lambda t}\frac{2c}{\lambda}\frac{\partial^2}{\partial u^2}I_0\left(\frac{\lambda}{c}\sqrt{c^2t^2-u^2}\right)=\\
\nonumber &= - e^{-\lambda t}\frac{2^2c}{\lambda}\sum_{k=0}^\infty \left(\frac{\lambda}{2c}\right)^{2k}\frac{k}{k!^2}(c^2t^2-u^2)^{k-1}+\\
\nonumber & +e^{-\lambda t}\frac{2^3c}{\lambda}\sum_{k=0}^\infty \left(\frac{\lambda}{2c}\right)^{2k}\frac{k(k-1)}{k!^2}(c^2t^2-u^2)^{k-2}u^2=\\
\nonumber & = - e^{-\lambda t}2\sum_{k=1}^\infty \frac{(\lambda t)^{2k-1}}{(2k-1)!}\frac{(2k-1)!}{(k-1)!k!}\frac{(c^2t^2-u^2)^{k-1}}{(2ct)^{2k-1}}+\\
\nonumber & +e^{-\lambda t}2^2\sum_{k=2}^\infty \frac{(\lambda t)^{2k-1}}{(2k-1)!}\frac{(2k-1)!}{(k-2)!k!}\frac{u^2(c^2t^2-u^2)^{k-2}}{(2ct)^{2k-1}}=\\
\nonumber & = S_1+S_2.
\end{align}
We observe that the term $k= 0$ of \eqref{con2} and the first term of $S_1$ cancel each other out so that, putting together \eqref{con2} with $S_1$ and $S_2$, we have
\begin{align}
\nonumber & \sum_{k=1}^\infty \Pr\{N(t)=2k+1\}\frac{(2k+1)!}{(ct)^{2k+1}}\frac{1}{2^{2k}}\frac{1}{k!^2}(c^2t^2-u^2)^k-\\
\nonumber & -2\sum_{k=2}^\infty \Pr\{N(t)=2k-1\}\frac{(2k-1)!}{(k-1)!k!}\frac{(c^2t^2-u^2)^{k-1}}{(2ct)^{2k-1}}+\\
\nonumber & +2^2 \sum_{k=2}^\infty \Pr\{N(t)=2k-1\}\frac{(2k-1)!}{(k-2)!k!}\frac{u^2(c^2t^2-u^2)^{k-2}}{(2ct)^{2k-1}}=\\
\nonumber & = \sum_{k=1}^\infty \Pr\{N(t)=2k+1\}\frac{(2k+1)!}{(ct)^{2k+1}}\frac{1}{2^{2k}}\frac{1}{k!^2}(c^2t^2-u^2)^k-\\
\nonumber & -2\sum_{k=1}^\infty \Pr\{N(t)=2k+1\}\frac{(2k+1)!}{(k+1)!k!}\frac{(c^2t^2-u^2)^{k}}{(2ct)^{2k+1}}+\\
\nonumber & +2^2 \sum_{k=1}^\infty \Pr\{N(t)=2k+1\}\frac{(2k+1)!}{(k-1)!(k+1)!}\frac{u^2(c^2t^2-u^2)^{k-1}}{(2ct)^{2k+1}}= \\
\nonumber &=  \sum_{k=1}^\infty \Pr\{N(t)=2k+1\}\frac{(2k+1)!}{2^{2k}(ct)^{2k+1}}\frac{(c^2t^2-u^2)^{k-1}}{(k-1)!(k+1)!}(c^2t^2+u^2)=\\
\nonumber &= \sum_{k=1}^\infty \Pr\{N(t)=2k+1\}\Pr\{\mathcal{U}(t)\in du| N(t) = 2k+1\}
\end{align}
and this concludes the proof.
\end{proof}

\begin{os}
From \eqref{3,1} we infer that for $k=0$ we have that 
\begin{equation}
\Pr\{\mathcal{U}(t)\in du|N(t)= 2 \} = \frac{du}{ct}, \quad 0<u<t.
\end{equation}
Since on the squares $u = \pm x\pm y$ the distribution of $(X(t),Y(t))$ is uniform and
is also uniform in $0\leq u \leq ct $, we conclude that for two changes of direction
the moving particle is uniformly distributed inside the square $S_{ct}$.
This astonishing fact is also true for planar motions with an infinite number of
directions where we have (see \cite{ale})
\begin{equation}
\Pr\{X(t)\in dx, Y(t)\in dy|N(t)= 2\} = \frac{dxdy}{\pi(ct)^2}, \quad x^2+y^2<c^2t^2.
\end{equation}
For 
\begin{equation}
\Pr\{\mathcal{U}(t)\in du|N(t)= 3\} = \frac{3}{4}\frac{c^2t^2+u^2}{(ct)^3}du, \quad 0<u<ct,
\end{equation}
the maximal values of the distribution are attained near the border of $S_{ct}$.\\
For a number of changes of direction $N(t)\geq 4$ the maximal concentration of the distribution
of $\mathcal{U}(t)$ is near the starting point because the sample paths coil up around
the origin.
\end{os}

\section{Cyclic random motions in higher dimensions}

We can see that a similar analytical treatment can be developed also in higher order dimensions, considering cyclic random motions with orthogonal directions.

For example, in the interesting three dimensional case, it is possible to prove (after the exponential change of variable $p(x,y,z,t) = e^{-\lambda t}w(x,y,z,t)$) that the governing equation for the density of the absolutely continuous component of the probability distribution function
\begin{equation}\label{3d}
P(x,y,z,t)= \Pr\{X(t)\leq x, Y(t)\leq y, Z(t)\leq z \},
\end{equation}

is given by 
\begin{equation}\label{w3}
\left(\frac{\partial^2}{\partial t^2}-c^2\frac{\partial^2}{\partial x^2}\right)\left(\frac{\partial^2}{\partial t^2}-c^2\frac{\partial^2}{\partial y^2}\right)\left(\frac{\partial^2}{\partial t^2}-c^2\frac{\partial^2}{\partial z^2}\right)w-\lambda^6 w = 0.
\end{equation} 

If the function $w\equiv w(x,y,z,t)$ is a solution of the differential system 
\begin{equation}\label{sys3}
\begin{cases}
&\displaystyle \left(\frac{\partial^2}{\partial t^2}-c^2\frac{\partial^2}{\partial x^2}\right) w = \lambda^2 w, \\
& \displaystyle \left(\frac{\partial^2}{\partial t^2}-c^2\frac{\partial^2}{\partial y^2}\right)w = \lambda^2 w, \\
& \displaystyle \left(\frac{\partial^2}{\partial t^2}-c^2\frac{\partial^2}{\partial z^2}\right) w = \lambda^2 w,
\end{cases}
\end{equation}
then it solves the sixth-order partial differential equation \eqref{w3}.
On the surfaces $u = x\pm y\pm z$ we have the Klein-Gordon equation 
\begin{equation}
\left(\frac{\partial^2}{\partial t^2}-c^2\frac{\partial^2}{\partial u^2}\right) w = \lambda^2 w
\end{equation}
and, in analogy with what happens in the two-dimensional case we have that for the half-diagonal of
the octahedrons $S^3_{u}$, the distribution of the r.v. $\mathcal{U}(t)$ has an absolutely continuous component given in the next theorem.

\begin{te}
The absolutely continuous component of the distribution \eqref{3d} has density
\begin{align}\label{dist3}
p(u,t) =& \frac{e^{-\lambda t}}{c}\bigg[-\lambda I_0\left(\frac{\lambda}{c}\sqrt{c^2t^2-u^2}\right)-3\frac{\partial}{\partial t}I_0\left(\frac{\lambda}{c}\sqrt{c^2t^2-u^2}\right)+\frac{2}{\lambda}\frac{\partial^2}{\partial t^2}I_0\left(\frac{\lambda}{c}\sqrt{c^2t^2-u^2}\right)+\\
\nonumber &+\frac{4}{\lambda^2}\frac{\partial^3}{\partial t^3}I_0\left(\frac{\lambda}{c}\sqrt{c^2t^2-u^2}\right)\bigg]\\
\nonumber & = \frac{e^{-\lambda t}}{c}\bigg[\lambda I_0\left(\frac{\lambda}{c}\sqrt{c^2t^2-u^2}\right)+\frac{\partial}{\partial t}I_0\left(\frac{\lambda}{c}\sqrt{c^2t^2-u^2}\right)+\frac{2c^2}{\lambda}\frac{\partial^2}{\partial u^2}I_0\left(\frac{\lambda}{c}\sqrt{c^2t^2-u^2}\right)+\\
\nonumber &+\frac{4c^2}{\lambda^2}\frac{\partial^3}{\partial t\partial u^2}I_0\left(\frac{\lambda}{c}\sqrt{c^2t^2-u^2}\right)\bigg],
\end{align} 
where $0<u<ct$.
\end{te}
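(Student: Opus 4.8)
The plan is to reproduce, line by line, the argument that gave the planar density \eqref{dist}, the single new feature being that the governing equation \eqref{w3} is now of order six. First I would pass to the variable $u=x\pm y\pm z$; on the corresponding surfaces the system \eqref{sys3} collapses, exactly as \eqref{sys} did in the plane, to the one-dimensional Klein--Gordon equation \eqref{star}. As there, differentiating \eqref{star} in $t$ shows that every time-derivative of $I_0\!\left(\frac{\lambda}{c}\sqrt{c^2t^2-u^2}\right)$ is again a solution, so the natural extension of the ansatz \eqref{sol} --- carrying one further time-derivative to match the increase of the order by two --- is
\begin{equation}\nonumber
p(u,t)=\frac{e^{-\lambda t}}{c}\left[A\,I_0+B\,\frac{\partial}{\partial t}I_0+C\,\frac{\partial^2}{\partial t^2}I_0+D\,\frac{\partial^3}{\partial t^3}I_0\right],\qquad I_0=I_0\!\left(\tfrac{\lambda}{c}\sqrt{c^2t^2-u^2}\right),
\end{equation}
with four real constants to be determined.

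These constants are fixed, as in the planar case, by the sole normalization condition, provided it is read as an identity in $t$. Geometrically, and exactly as $N(t)\le 1$ does in the plane, the events $N(t)\le 2$ place the particle on $\partial S^3_{ct}$: three consecutive cyclic displacements span a triangular face of the octahedron on which $\mathcal{U}(t)=ct$, so that $N(t)=0,1,2$ constitute the singular part. This is precisely the normalization stated in the Introduction specialized to $d=3$, namely
\begin{equation}\nonumber
\int_0^{ct}p(u,t)\,du=1-\sum_{k=0}^{2}\Pr\{N(t)=k\}=1-e^{-\lambda t}\left(1+\lambda t+\tfrac{(\lambda t)^2}{2}\right).
\end{equation}
I would then evaluate the four integrals $\int_0^{ct}\frac{\partial^j}{\partial t^j}I_0\,du$, $j=0,1,2,3$, recursively from the Leibniz rule $\frac{d}{dt}\int_0^{ct}g\,du=c\,g\big|_{u=ct}+\int_0^{ct}\frac{\partial g}{\partial t}\,du$; the cases $j=0,1,2$ are the ones already tabulated in the proof of \eqref{dist}.

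Inserting the ansatz and collecting the functionally independent terms $1,\,e^{-2\lambda t},\,e^{-\lambda t},\,t\,e^{-\lambda t},\,t^2e^{-\lambda t}$ turns the normalization identity into a small linear system: the $t^2e^{-\lambda t}$ and $t\,e^{-\lambda t}$ balances give at once $D=4/\lambda^2$ and $C=2/\lambda$, the $e^{-\lambda t}$ balance then yields $B=-3$, and the remaining $1$ and $e^{-2\lambda t}$ balances fix $A=-\lambda$ while supplying one redundant equation that serves as a consistency check. This produces the first line of \eqref{dist3}. The second line is then purely algebraic: substituting the Klein--Gordon identity $\frac{\partial^2}{\partial t^2}I_0=c^2\frac{\partial^2}{\partial u^2}I_0+\lambda^2I_0$ and its $t$-derivative $\frac{\partial^3}{\partial t^3}I_0=c^2\frac{\partial^3}{\partial t\,\partial u^2}I_0+\lambda^2\frac{\partial}{\partial t}I_0$ converts the $\frac{2}{\lambda}\partial_t^2$ and $\frac{4}{\lambda^2}\partial_t^3$ terms into $\partial_u^2$ and $\partial_t\partial_u^2$ terms and shifts the coefficients of $I_0$ and $\partial_tI_0$ from $(-\lambda,-3)$ to $(\lambda,1)$, as claimed.

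The step I expect to be the real obstacle is the evaluation of the new integral $\int_0^{ct}\frac{\partial^3}{\partial t^3}I_0\,du$. Through the Leibniz recursion its boundary contribution requires $\frac{\partial^2}{\partial t^2}I_0\big|_{u=ct}$, and, in contrast to $I_0\big|_{u=ct}=1$ and $\frac{\partial}{\partial t}I_0\big|_{u=ct}=\lambda^2t/2$, this boundary value is not a single power of $t$: it carries a genuine $t^2$ contribution coming from the $k=2$ term of the Bessel series. It is exactly this $t^2$ term that produces the $t^2e^{-\lambda t}$ balance in the normalization, and hence the coefficient $D=4/\lambda^2$, so a dropped term or a sign slip here would silently corrupt the whole calibration. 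A secondary, more conceptual point is the geometric claim that $N(t)=2$ belongs to the singular component: it alone fixes the right-hand side of the normalization identity, and therefore the values of $A,B,C,D$.
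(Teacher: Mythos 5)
Your proposal is correct and follows essentially the same route as the paper: the same four-term ansatz in time-derivatives of $I_0\left(\frac{\lambda}{c}\sqrt{c^2t^2-u^2}\right)$, calibrated by the single normalization condition $\int_0^{ct}p(u,t)\,du = 1-e^{-\lambda t}\left(1+\lambda t+\frac{(\lambda t)^2}{2}\right)$ with the singular part carried by $N(t)\le 2$, and the second form obtained from the Klein--Gordon identity \eqref{kgg}. Your coefficient values and the new integral $\int_0^{ct}\frac{\partial^3}{\partial t^3}I_0\,du = \frac{c\lambda^2}{2}\left(e^{\lambda t}+e^{-\lambda t}\right)-\lambda^2 c-\frac{\lambda^4 c t^2}{8}$ (including the $t^2$ boundary contribution from the $k=2$ Bessel term, which the paper states without derivation) all check out.
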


\begin{proof}

We first observe that equation \eqref{w3} can be reduced to a single equation in $(u,t)$ on the octahedron homothetic to $\partial S^3_{ct}$ (see Fig.3)
\begin{equation}\nonumber
\left(\frac{\partial^2}{\partial t^2}-c^2\frac{\partial^2}{\partial u^2}\right)w-\lambda^2 w = 0.
\end{equation}

and it admits solutions of the form 

\begin{align}
\nonumber w(u,t)&= A I_0\left(\frac{\lambda}{c}\sqrt{c^2t^2-u^2}\right)+B\frac{\partial}{\partial t}I_0\left(\frac{\lambda}{c}\sqrt{c^2t^2-u^2}\right)+C\frac{\partial^2}{\partial t^2}I_0\left(\frac{\lambda}{c}\sqrt{c^2t^2-u^2}\right)+\\
&\nonumber +D\frac{\partial^3}{\partial t^3}I_0\left(\frac{\lambda}{c}\sqrt{c^2t^2-u^2}\right),
\end{align}
where $A, B, C, D$ are real coefficients that must be chosen in such a way that the condition
\begin{equation}\label{co3}
\int_{0}^{ct} p(u,t)du = 1-e^{-\lambda t}-\lambda t \ e^{-\lambda t}-\frac{\lambda^2 t^2}{2} e^{-\lambda t},
\end{equation}
is fulfilled, since in this case, we have that the singular part of the distribution pertains to the probability that the particle has no change, one or two changes of directions.\\
By observing that
\begin{equation}
\int_0^{ct} \frac{\partial^3}{\partial t^3}I_0\left(\frac{\lambda}{c}\sqrt{c^2t^2-u^2}\right) du =
\frac{c\lambda^2}{2}(e^{\lambda t}+e^{-\lambda t})-\lambda^2 c-\frac{\lambda^4 c t^2}{8}
\end{equation}
we obtain the claimed result.\\

The second form of \eqref{dist3} is derived by applying \eqref{kgg} after some calculations.
\end{proof}

\begin{figure}
            \centering
            \includegraphics[scale=.63]{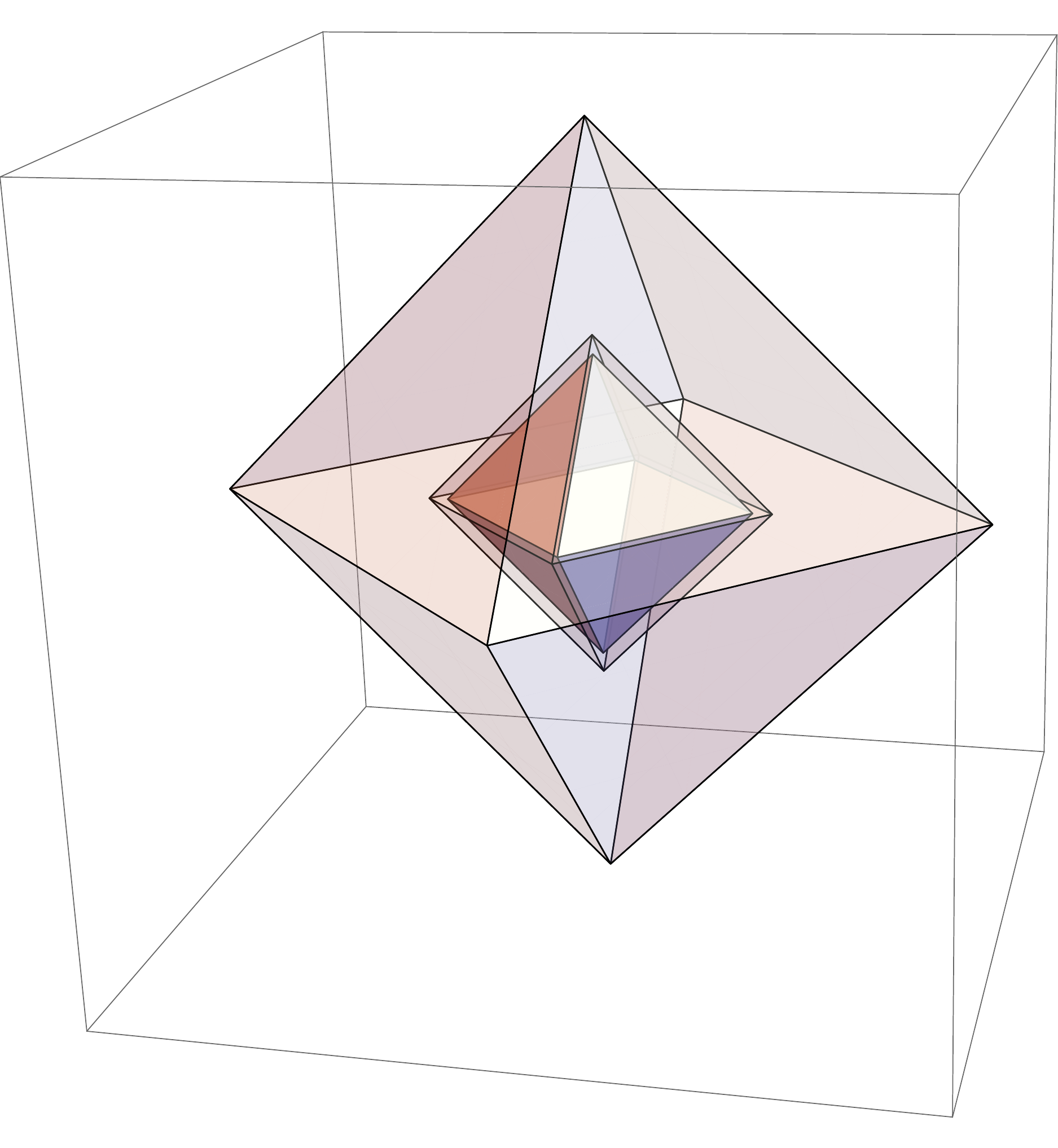}
            \caption{The octahedron $S_{ct}^3$ of the possible positions of 
            $(X(t),Y(t),Z(t))$ at time $t$ is represented. The surface of an octahedron
            homothetic to $S^3_{ct}$ is drawn. On $S^3_{ct}$ the distribitution
            of the moving particle is uniform.
            }
            \label{figura3}
        \end{figure}

\begin{te}
The conditional distributions for the cyclic motions in $\mathbb{R}^3$ are
\begin{equation}
\Pr\{\mathcal{U}(t)\in du| N(t) = 2k+2\} =\frac{du \ (2k+2)!}{(k+2)!(k-1)!}\frac{(c^2t^2-u^2)^{k-1}}{(2ct)^{2k+1}}(c^2t^2+3u^2), \ k\geq 1 
\end{equation}
and 
\begin{equation}
\Pr\{\mathcal{U}(t)\in du| N(t) = 2k+1\} =\frac{du \ (2k+1)!}{2^{2k}(ct)^{2k+1}(k-1)!(k+1)!}(c^2t^2-u^2)^{k-1}(c^2t^2+u^2), \ k\geq 1,
\end{equation}
with $0\leq u \leq ct$.
\end{te}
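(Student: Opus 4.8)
The plan is to follow the strategy already used for the planar conditional distributions, exploiting the fact that the second representation of $p(u,t)$ in \eqref{dist3} coincides with its two-dimensional analogue \eqref{con1} in its first three summands and differs only by the additional term
\[
\frac{e^{-\lambda t}}{c}\,\frac{4c^2}{\lambda^2}\,\frac{\partial^3}{\partial t\,\partial u^2}I_0\!\left(\frac{\lambda}{c}\sqrt{c^2t^2-u^2}\right).
\]
First I would insert the series $I_0\!\left(\frac{\lambda}{c}\sqrt{c^2t^2-u^2}\right)=\sum_{k\ge 0}\left(\frac{\lambda}{2c}\right)^{2k}\frac{1}{k!^2}(c^2t^2-u^2)^k$ into all four terms of \eqref{dist3} and, for each, track the parity of the resulting power of $\lambda$: an even power $\lambda^{2m}$ is absorbed into $\Pr\{N(t)=2m\}=e^{-\lambda t}(\lambda t)^{2m}/(2m)!$ and an odd power into $\Pr\{N(t)=2m+1\}$, so that the coefficient of each Poisson weight is read off as the sought conditional law.

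The odd case is then inherited for free. Since the new term carries the prefactor $\lambda^{-2}$ multiplying $(\lambda/2c)^{2k}$ and $\partial_t$ introduces no further power of $\lambda$, it generates only even powers of $\lambda$ and therefore contributes nothing to the odd-indexed probabilities. Hence the entire odd-$N$ part stems from the first summand $\lambda I_0$ and the third summand $\frac{2c^2}{\lambda}\partial_u^2 I_0$ of \eqref{dist3}, which are literally the corresponding summands of \eqref{con1}; their treatment, including the cancellation of the $k=0$ contribution between them, is word-for-word the planar one and reproduces exactly the claimed distribution for $N(t)=2k+1$. This half of the statement thus requires no new computation.

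For the even case I would argue as follows. The summand $\frac{e^{-\lambda t}}{c}\partial_t I_0$ reproduces the planar even contribution $\frac{(2k+2)!}{k!(k+1)!}(2ct)^{-(2k+1)}(c^2t^2-u^2)^k$. For the new term, I apply $\partial_u^2$ to the series, obtaining the two pieces $-2k(c^2t^2-u^2)^{k-1}$ and $4k(k-1)u^2(c^2t^2-u^2)^{k-2}$; I then apply $\partial_t$ (which lowers each power of $c^2t^2-u^2$ by one and brings down a factor $2c^2t$) and reindex by setting the summation index equal to $k+2$, so that the surviving power $\lambda^{2k+2}$ matches $(\lambda t)^{2k+2}/(2k+2)!$. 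Adding this to the $\partial_t I_0$ contribution and factoring out $\Pr\{N(t)=2k+2\}$ together with the common constant $(2k+2)!\,2^{-2k}(ct)^{-(2k+1)}$, the remaining bracket must collapse to $\frac{1}{2(k+2)!(k-1)!}(c^2t^2-u^2)^{k-1}(c^2t^2+3u^2)$; here the identity $c^2t^2+3u^2=(c^2t^2-u^2)+4u^2$ allows me to compare the coefficients of $(c^2t^2-u^2)^k$ and of $u^2(c^2t^2-u^2)^{k-1}$ on the two sides separately.

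The hard part will be precisely this even-case bookkeeping: three series with different shifts of the summation index and different factorial weights have to be brought to a common index $k$, and one must verify the two factorial identities $\frac{1}{2k!(k+1)!}-\frac{1}{(k+2)(k+1)k!^2}=\frac{1}{2(k+2)!(k-1)!}$ and $\frac{2}{(k+2)(k+1)(k-1)!\,k!}=\frac{2}{(k+2)!(k-1)!}$, which yield respectively the $(c^2t^2-u^2)$ and the $u^2$ parts of the factor $c^2t^2+3u^2$. The shift from $k\ge 0$ in the planar case to $k\ge 1$ here, forced by the $(k-1)!$ in the denominator, must be tracked carefully so that no spurious low-index term survives. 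Once these identities are confirmed, reading off the coefficient of $\Pr\{N(t)=2k+2\}$ delivers the asserted even conditional distribution and completes the proof.
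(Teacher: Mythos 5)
Your proposal is correct and is essentially the paper's own proof: the paper likewise splits $p_3(u,t)=p_2(u,t)+\frac{4c}{\lambda^2}e^{-\lambda t}\frac{\partial^3}{\partial t\,\partial u^2}I_0\left(\frac{\lambda}{c}\sqrt{c^2t^2-u^2}\right)$, lets the odd-$N$ distributions be inherited from the planar case (the extra term producing only even powers of $\lambda$), and handles the even case by expanding the extra term in series, reindexing with $m=k+2$, noting the cancellation of the $k=0$ planar even term, and collapsing the coefficients via $(c^2t^2-u^2)+4u^2=c^2t^2+3u^2$. Your two factorial identities are exactly the paper's bracket $\left[\frac{1}{2k}-\frac{1}{k(k+2)}\right](c^2t^2-u^2)+\frac{2u^2}{k+2}$ written in a slightly different normalization, and they do check out, with the first one vanishing at $k=0$ so that no spurious $N(t)=2$ term survives.
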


\begin{proof}
Let us denote with $p_2(u,t)$ the density of the two dimensional cyclic random motion and $p_3(u,t)$ the density in the three dimensional case.
The relationship
\begin{align}
\nonumber p_3(u,t)& = \frac{e^{-\lambda t}}{c}\bigg[\lambda I_0\left(\frac{\lambda}{c}\sqrt{c^2t^2-u^2}\right)+\frac{\partial}{\partial t}I_0\left(\frac{\lambda}{c}\sqrt{c^2t^2-u^2}\right)+\frac{2c^2}{\lambda}\frac{\partial^2}{\partial u^2}I_0\left(\frac{\lambda}{c}\sqrt{c^2t^2-u^2}\right)+\\
\nonumber &+\frac{4c^2}{\lambda^2}\frac{\partial^3}{\partial t\partial u^2}I_0\left(\frac{\lambda}{c}\sqrt{c^2t^2-u^2}\right)\bigg] =\\ \nonumber&=p_2(u,t)+\frac{4c}{\lambda^2}e^{-\lambda t}\frac{\partial^3}{\partial t\partial u^2}I_0\left(\frac{\lambda}{c}\sqrt{c^2t^2-u^2}\right)
\end{align}
entails that we must concentrate our attention on the last term $$\frac{4c}{\lambda^2}e^{-\lambda t}\frac{\partial^3}{\partial t\partial u^2}I_0\left(\frac{\lambda}{c}\sqrt{c^2t^2-u^2}\right).$$
We observe that 
\begin{equation}\nonumber
\frac{\partial^3}{\partial t\partial u^2}I_0\left(\frac{\lambda}{c}\sqrt{c^2t^2-u^2}\right)=
-2^2 c^2 t\sum_{k=2}^\infty \left(\frac{\lambda}{2c}\right)^{2k}\frac{(c^2t^2-u^2)^{k-2}}{(k-2)!k!}+
4u^2 2c^2t \sum_{k=3}^\infty \left(\frac{\lambda}{2c}\right)^{2k}\frac{(c^2t^2-u^2)^{k-3}}{(k-3)!k!}
\end{equation}
or
\begin{align}\label{tre0}
e^{-\lambda t}\frac{4c^2}{c\lambda^2}\frac{\partial^3}{\partial t\partial u^2}I_0\left(\frac{\lambda}{c}\sqrt{c^2t^2-u^2}\right)&=
-\frac{2^4 c^3 t\cdot e^{-\lambda t}}{\lambda^2}\sum_{k=2}^\infty \left(\frac{\lambda}{2c}\right)^{2k}\frac{(c^2t^2-u^2)^{k-2}}{(k-2)!k!}+\\
\nonumber &+\frac{u^2 2^5 c^3 t\cdot e^{-\lambda t}}{\lambda^2} \sum_{k=3}^\infty \left(\frac{\lambda}{2c}\right)^{2k}\frac{(c^2t^2-u^2)^{k-3}}{(k-3)!k!}
\end{align}
The term $k=0$ of (3.22) and the term $k = 2$ in \eqref{tre0} cancel each other out.
We now put together (3.22) (without $k=0$) and \eqref{tre0} (without $k=2$) thus obtaining
\begin{align}
\nonumber &e^{-\lambda t} \sum_{k=1}^\infty \frac{(\lambda t)^{2k+2}}{(2k+2)!}\frac{(2k+2)!}{k!(k+1)!}
\frac{(c^2t^2-u^2)^k}{(2ct)^{2k+1}}-e^{-\lambda t}\frac{2^4c^3t}{\lambda^2} \sum_{k=3}^\infty \left(\frac{\lambda}{2c}\right)^{2k}\frac{(c^2t^2-u^2)^{k-2}}{k!(k-2)!}+\\
\nonumber &+e^{-\lambda t}\frac{2^5c^3t u}{\lambda^2} \sum_{k=3}^\infty \left(\frac{\lambda}{2c}\right)^{2k}\frac{(c^2t^2-u^2)^{k-3}}{k!(k-3)!}=\\
\nonumber & = e^{-\lambda t} \sum_{k=1}^\infty \frac{(\lambda t)^{2k+2}}{(2k+2)!}\frac{(2k+2)!}{k!(k+1)!}
\frac{(c^2t^2-u^2)^k}{(2ct)^{2k+1}}+e^{-\lambda t}2^4c^3t \sum_{k=3}^\infty \frac{(\lambda t)^{2k-2}}{(2c)^{2k}}\frac{(2k-2)!}{(2k-2)!}\frac{(c^2t^2-u^2)^{k-2}}{k!(k-2)!t^{2k-2}}+\\
\nonumber &+e^{-\lambda t}2^5c^3t u^2 \sum_{k=3}^\infty \frac{(\lambda t)^{2k-2}}{(2c)^{2k}}\frac{(2k-2)!}{(2k-2)!}\frac{(c^2t^2-u^2)^{k-3}}{k!(k-3)!t^{2k-2}}=\\
\nonumber &= \sum_{k=1}^\infty \Pr\{N(t)=2k+2\}\frac{(2k+2)!}{k!(k+1)!}
\frac{(c^2t^2-u^2)^k}{(2ct)^{2k+1}}- \sum_{k=1}^\infty \Pr\{N(t)=2k+2\}\frac{(2k+2)!}{2^{2k}(ct)^{2k+1}}\frac{(c^2t^2-u^2)^k}{k!(k+1)!}+\\
\nonumber &+ \sum_{k=1}^\infty \Pr\{N(t)=2k+2\}\frac{(2k+2)!}{(ct)^{2k+1}}\frac{2u^2(c^2t^2-u^2)^{k-1}}{(k+2)!(k-1)!} = \\
\nonumber & = \sum_{k=1}^\infty \Pr\{N(t)=2k+2\}\frac{(2k+2)!}{(k+1)!(k-1)!}\frac{(c^2t^2-u^2)^{k-1}}{2^{2k}(ct)^{2k+1}}\bigg[(c^2t^2-u^2)\left[\frac{1}{2k}-\frac{1}{k(k+2)}\right]+\frac{2u^2}{k+2}\bigg]=\\
\nonumber & =\sum_{k=1}^\infty \Pr\{N(t)=2k+2\}\frac{(2k+2)!}{(k+2)!(k-1)!}\frac{(c^2t^2-u^2)^{k-1}(c^2t^2+3u^2)}{(2ct)^{2k+1}}
\end{align}
and this concludes the proof.
\end{proof}

\begin{os}
In the first cycle, the moving particle can reach each of the vertices of the 
octahedron $S^3_{ct}$ with uniformly distributed probability $e^{-\lambda t}/6$, the edges of $S^3_{ct}$ with probability $\lambda t e^{-\lambda t}$ and the faces of $S^3_{ct}$ with probability 
$\frac{(\lambda t)^2}{2} e^{-\lambda t}$ uniformly spread over them. If the number $k$ of changes of direction
is equal to or greater than 3 (and less then or equal to 5) we have that the position of the particle is uniformly distributed on the octahedrons $S_{\mathcal{U}^3}$ homothetic to $S^3_{ct}$ (see Fig.3), where 
$\mathcal{U}(t)$ is a random variable with the following conditional distributions
\begin{equation}
\begin{cases}
&\Pr\{\mathcal{U}(t)\in du|N(t) = 3\} = \frac{3!(c^2t^2+u^2)}{2^3(ct)^3}du\\
&\Pr\{\mathcal{U}(t)\in du|N(t) = 4\} = \frac{(c^2t^2+3u^2)}{2(ct)^3}du\\
&\Pr\{\mathcal{U}(t)\in du|N(t) = 5\} = \frac{5(c^4t^4-u^4)}{4(ct)^5}du
\end{cases}
\end{equation}  
 for $0<u<ct$.\\
 Furthermore 
 \begin{equation}
\begin{cases}
&\mathbb{E}\{\mathcal{U}(t)|N(t) = 3\} = \frac{9}{16}ct\\
&\mathbb{E}\{\mathcal{U}(t)|N(t) = 4\} = \frac{5}{8}ct\\
&\mathbb{E}\{\mathcal{U}(t)|N(t) = 5\} = \frac{5}{12}ct.
\end{cases}
\end{equation}  

\end{os}

\begin{os}
The general mean values of the conditional r.v. $\mathcal{U}(t)$ read
\begin{equation}
\begin{cases}
&\mathbb{E}\{\mathcal{U}(t)|N(t) = 2k+1\} = \frac{(2k+1)!(k+2)}{2^{2k+1}(k+1)!^2}ct\\
&\mathbb{E}\{\mathcal{U}(t)|N(t) = 2k+2\} = \frac{(2k+1)!(k+4)}{2^{2k+1}k!(k+2)!}ct
\end{cases}
\end{equation}
The mean values above can be rewritten also as 
\begin{equation}
\begin{cases}
&\mathbb{E}\{\mathcal{U}(t)|N(t) = 2k+1\} = \binom{2k+1}{k+1}\frac{ct}{2^{2k+1}}\frac{k+2}{k+1}\\
&\mathbb{E}\{\mathcal{U}(t)|N(t) = 2k+2\} = \binom{2k+1}{k+1}\frac{ct}{2^{2k+1}}\frac{k+4}{k+2}
\end{cases}
\end{equation}
and thus
\begin{equation}
\frac{\mathbb{E}\{\mathcal{U}(t)|N(t) = 2k+1\}}{\mathbb{E}\{\mathcal{U}(t)|N(t) = 2k+2\}} = 
\frac{(k+2)^2}{(k+1))(k+4)} = 1-\frac{3k}{(k+1)(k+4)}
\end{equation}
and for $k\rightarrow \infty$ they will coincide. The formulas above can be written in terms
of Catalan numbers
$$C_k = \binom{2k}{k}\frac{1}{k+1}, \quad k\geq 1.$$
\end{os}

\begin{os}
We observe that the one-dimensional counterpart of the cyclic motions treated here is
the telegraph process $T(t)$ and its folded conditional distributions are
\begin{equation}\label{aa}
\begin{cases}
& \Pr\{|T(t)|\in du|N(t)= 2k+2\}= \frac{(2k+2)!}{k!(k+1)!}\frac{(c^2t^2-u^2)^k}{(2ct)^{2k+1}}du, \quad k\geq 0,\\
&\Pr\{|T(t)|\in du|N(t)= 2k+1\}= \frac{(2k+1)!}{k!^2}\frac{(c^2t^2-u^2)^k}{2^{2k}(ct)^{2k+1}}du, \quad k\geq 0.
\end{cases}
\end{equation}
In the planar cyclic motion of Section 3, the even-order probabilities coincide with those of the first line of (4.16) (distributed however on squares homothetic to $S_{ct}$).
In $\mathbb{R}^3$ the odd-order distributions coincide with the odd-order conditional density of the planar cyclic motions. Also in this case, the distributions are uniform on the 
octahedrons homothetic to $S^3_{ct}$. We have therefore that (we write $\mathcal{U}_1(t)$ for $|T(t)|$)
\begin{align}
\nonumber &\Pr\{\mathcal{U}_1(t)\in du|N(t)= 2k+2\}= \Pr\{\mathcal{U}_2(t)\in du |N(t)= 2k+2\}\\
\nonumber &\Pr\{\mathcal{U}_2(t)\in du|N(t)= 2k+1\}= \Pr\{\mathcal{U}_3(t)\in du |N(t)= 2k+1\}.
\end{align}
We conjecture therefore that for motions in Euclidean spaces of arbitrary dimension $d$, we must have
\begin{align}
\nonumber &\Pr\{\mathcal{U}_{2d-1}(t)\in du|N(t)= 2k+2\}= \Pr\{\mathcal{U}_{2d}(t)\in du |N(t)= 2k+2\}\\
\nonumber &\Pr\{\mathcal{U}_{2d}(t)\in du|N(t)= 2k+1\}= \Pr\{\mathcal{U}_{2d+1}(t)\in du |N(t)= 2k+1\}.
\end{align}
\end{os}

Observe that similar results can be derived for general higher order dimensions $d\geq 2$.
Also in this case, after an exponential change of variable, i.e. $p(x_1, \dots, x_d,t) =
e^{-\lambda t} w(x_1, \dots, x_d, t)$, the governing equation has the form 
\begin{equation}
\prod_{i=1}^d \left(\frac{\partial^2}{\partial t^2}-c^2\frac{\partial^2}{\partial x_i^2}\right)w-
\lambda^{2d}w = 0
\end{equation}
and the general structure of the solution (after a change of variable in $(u,t)$) is given by
\begin{equation}
p(u,t) = e^{-\lambda t}\sum_{i=0}^d A_i\frac{\partial^i}{\partial t^i}I_0\left(\frac{\lambda}{c}\sqrt{c^2t^2-u^2}\right),
\end{equation}
where $A_i$ are suitably chosen coefficients, such that 
\begin{equation}
\int_0^{ct} p(u,t)du = 1-\sum_{k=0}^{d-1} \frac{(\lambda t)^k}{k!} e^{-\lambda t}.
\end{equation}

\begin{os}

The equation 
\begin{equation}
\prod_{k=1}^2 \bigg[\left(\frac{\partial}{\partial t}+\lambda\right)^2-c^2\frac{\partial^2}{\partial x_k^2}\bigg]p-
\lambda^{4}p = 0
\end{equation}
converges to the heat equation 
\begin{equation}
\frac{\partial p}{\partial t} = \frac{1}{4}\left(\frac{\partial^2}{\partial x_1^2}+
\frac{\partial^2}{\partial x_2^2}\right)p
\end{equation}
as $\lambda, c\rightarrow +\infty$ in such a way that $c^2/\lambda \rightarrow 1$.\\
The general equation 
\begin{equation}\label{dee}
\prod_{k=1}^d \bigg[\left(\frac{\partial}{\partial t}+\lambda\right)^2-c^2\frac{\partial^2}{\partial x_k^2}\bigg]p-
\lambda^{2d}p = 0
\end{equation}
converges to the heat equation
\begin{equation}\label{lass}
\frac{\partial p}{\partial t} = \frac{1}{2d}\Delta p,
\end{equation}
where the \textit{volatility} $\sigma^2 = 1/d$.
The crucial terms in developing the operator \eqref{dee} are
  $$\left(\frac{\partial}{\partial t}+\lambda\right)^{2d}$$
and
\begin{equation}\label{las}
-c^2\left(\frac{\partial}{\partial t}+\lambda\right)^{2d-2}\sum_{k=1}^d\frac{\partial^2}{\partial x_k^2}.
\end{equation}
By dividing \eqref{las} by $\lambda^{2d-1}$, \eqref{las} can be rewritten as
\begin{equation}
\nonumber -\frac{c^2}{\lambda}\left(\frac{1}{\lambda}\frac{\partial}{\partial t}+1\right)^{2d-2}\sum_{k=1}^d\frac{\partial^2}{\partial x_k^2}
\end{equation}
and in the limit yields the right-hand side of \eqref{lass}.
In 
\begin{equation}\label{ultimas}
\left(\frac{\partial}{\partial t}+\lambda\right)^{2d} = \sum_{k=0}^{2d} \binom{2d}{k}\lambda^k
\left(\frac{\partial}{\partial t}\right)^{2d-k}.
\end{equation}
The term $k=0$ in \eqref{ultimas} cancels $\lambda^{2d}$ of \eqref{dee} and $k = 2d-2$ reads
$$(2d)\lambda^{2d-1}\left(\frac{\partial}{\partial t}\right)$$
and thus dividing by $\lambda^{2d-1}$ yields the left-hand side of the heat equation \eqref{lass}.
\end{os}

 \bigskip
                  
                  \textbf{Acknowledgment:} The authors thank Dr. F. Iafrate for his help on 
                  the graphical simulations presented in the paper.

\end{document}